\documentclass[11pt]{article}
\usepackage{bbm}
\usepackage{geometry}
\usepackage{color}
\usepackage{amsfonts}
\usepackage{latexsym, amssymb, amsmath, amscd, amsthm, amsxtra}
\usepackage{mathtools}
\usepackage{enumerate}
\usepackage[all]{xy}
\usepackage{mathrsfs}
\usepackage{fancyhdr}
\usepackage{listings}
\usepackage{hyperref}

\pagestyle{plain}

\def\P{\mathbb{P}}

\def\E{\mathbb{E}}

\def\11{\mathbbm{1}}

\newcommand\dif{\mathop{}\!\mathrm{d}}

\newtheorem{thm}{Theorem}[section]

\newtheorem{proposition}[thm]{Proposition}

\newtheorem{lemma}[thm]{Lemma}

\theoremstyle{definition}
\newtheorem{remark}[thm]{Remark}

\numberwithin{equation}{section}

\begin{document}
\title{Detection threshold for correlated Erd\H{o}s-R\'enyi graphs via densest subgraphs}

\author{Jian Ding\\Peking University \and Hang Du\\Peking University}

\maketitle

\begin{abstract}
	The problem of detecting edge correlation between two Erd\H{o}s-R\'enyi random graphs on $n$ unlabeled nodes can be formulated as a hypothesis testing problem: under the null hypothesis, the two graphs are sampled independently; under the alternative, the two graphs are independently sub-sampled from a parent graph which is Erd\H{o}s-R\'enyi $\mathbf{G}(n, p)$ (so that their marginal distributions are the same as the null). We establish a sharp information-theoretic threshold when $p = n^{-\alpha+o(1)}$ for $\alpha\in  (0, 1]$ which sharpens a constant factor in a recent work by Wu, Xu and Yu. A key novelty in our work is an interesting connection between the detection problem and the densest subgraph of an Erd\H{o}s-R\'enyi graph.
\end{abstract}

\maketitle


\section{Introduction}\label{sec:intro}

In this paper, we study the information-theoretic threshold for detecting the correlation between a pair of Erd\H{o}s-R\'enyi graphs. To put this question into a precise mathematical framework, we first need to choose a probabilistic model for a pair of correlated Erd\H{o}s-R\'enyi graphs, and one natural choice is to obtain the two graphs as two independent subsamplings from a common Erd\H{o}s-R\'enyi graph. More formally, for two vertex sets $V, \mathsf V$ of cardinality $n$ we let $E_0$ be the set of unordered pairs $(u,v)$ with $u,v\in V$ and $u\neq v$, and let $\mathsf E_0$ be the set of unordered pairs $(\mathsf u, \mathsf v)$ with $\mathsf u, \mathsf v\in \mathsf V$ and $\mathsf u\neq \mathsf v$. For some model parameters $p,s\in (0,1)$ (which may depend on $n$), we sample a uniform bijection $\pi^*$ between $V$ and $\mathsf V$, independent Bernoulli variables $\{J_{u, v}: (u,v)\in E_0\}$ with parameter $p$ and independent Bernoulli variables $\{I_{u, v}: (u,v)\in E_0\}, \{\mathsf I_{\mathsf u, \mathsf v}: (\mathsf u, \mathsf v)\in \mathsf E_0\}$ with parameter $s$, and then we define
\begin{equation}\label{eq:rule}
	(G_{u,v},\mathsf G_{\pi^*(u),\pi^*(v)})=(J_{u,v}I_{u,v},J_{u,v}\mathsf I_{\pi^*(u),\pi^*(v)})\,.
\end{equation}
Then $(G, \mathsf G)$ forms a pair of correlated Erd\H{o}s-R\'enyi graphs where the edge set of $E$ (respectively $\mathsf E$) consists of all $(u, v)\in E_0$ (respectively $(\mathsf u, \mathsf v)\in \mathsf E_0$) such that $G_{u, v}=1$ (respectively $\mathsf G_{\mathsf u, \mathsf v} =1$). Note that marginally each $G$ and $\mathsf G$ is an Erd\H{o}s-R\'enyi graph on $n$ vertices with edge probability $ps$, whose law we denote as $\mathbf{G}(n, ps)$.

Therefore, one (natural) version for the problem of detecting correlated Erd\H{o}s-R\'enyi graphs can be formulated as a hypothesis testing problem, where the null hypothesis $\operatorname{H}_0$ and the alternative hypothesis $\operatorname{H}_1$ are given as 
\begin{align*}
	\operatorname{H}_0: &\ (G,\mathsf G)\sim \mathsf P,\text{ which is a pair of independent Erd\H{o}s-R\'enyi Graphs }\mathbf{G}\left(n,ps\right),\\
	\operatorname{H}_1: &\ (G,\mathsf G)\sim \mathsf Q,\text{ which is a pair of correlated graphs given by the rule \eqref{eq:rule}} \,.
\end{align*}
Our goal is to test $\operatorname{H}_0$ versus $\operatorname{H}_1$ given $(G,\mathsf G)$ as observations while $\pi^*$ remains to be unknown. It is well-known that the testing error is captured by the total variation distance between the null and alternative distributions, and our main contribution is to establish a sharp phase transition on this total variation distance in the sparse regime.
\begin{thm}\label{thm1}
	Suppose $p=p(n)$ satisfies $p=n^{-\alpha+o(1)}$ for some $\alpha\in(0,1]$ as $n\to \infty$. Let $\lambda_*=\varrho^{-1}(\frac{1}{\alpha})$ (where $\rho$ is defined in \eqref{eq-densest-subgraph} below), then for any constant $\varepsilon>0$, the following holds.
	For $\alpha\in (0,1]$, if $s$ satisfies $nps^2\ge \lambda_*+\varepsilon$, then
	\begin{equation}\label{eq:positive}
		\operatorname{TV}(\mathsf{P},\mathsf{Q})=1-o(1) \mbox{ as }n\to\infty,
	\end{equation}
	where $\operatorname{TV}(\mathsf{P},\mathsf{Q}) = \frac{1}{2}\sum_{\omega} |\mathsf P[\omega] - \mathsf Q[\omega]|$ is the total variation distance between $\mathsf P$ and $\mathsf Q$.
	In addition, for $\alpha\in (0,1)$, if $s$ satisfies $nps^2\le \lambda_*-\varepsilon$, then
	\begin{equation}\label{eq:negative-<1}
		\operatorname{TV}(\mathsf{P},\mathsf{Q})=o(1) \mbox{ as }n\to\infty.
	\end{equation} 
\end{thm}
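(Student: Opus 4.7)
The plan is to establish both directions via analysis that ties the detection problem to subgraph-density structure in the observed graphs.

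\textbf{Positive direction} ($nps^2 \geq \lambda_*+\varepsilon$). I would design a test statistic that counts small subgraph patterns whose appearance is a signature of correlation. The condition $nps^2>\lambda_*$, together with the definition of $\varrho$, should supply a finite ``balanced'' graph $H$ with density $e(H)/v(H)$ exceeding a critical threshold (related to $1/\alpha$), such that under $\mathsf{P}$ the expected number of \emph{aligned} pairs of copies of $H$ in $(G,\mathsf{G})$ vanishes, whereas under $\mathsf{Q}$ the true bijection $\pi^*$ forces many such aligned pairs. Concretely, the statistic to be used is essentially the count of pairs of embeddings $(\phi,\psi)$ with $\phi:H\hookrightarrow G$ and $\psi:H\hookrightarrow \mathsf{G}$ that are ``matched'' in the sense of respecting a common vertex labeling. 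Computing first and second moments of this count under both hypotheses and invoking Paley--Zygmund/Markov shows that the statistic separates $\mathsf{P}$ from $\mathsf{Q}$. The density condition on $H$ is exactly what makes random matched copies rare (suppressing the count under $\mathsf{P}$) while allowing many $\pi^*$-aligned copies (boosting the count under $\mathsf{Q}$).

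\textbf{Negative direction} ($nps^2 \leq \lambda_*-\varepsilon$, $\alpha<1$). I would apply a truncated second moment to the likelihood ratio $L = d\mathsf{Q}/d\mathsf{P}$. Define a good event $\mathcal{E}$ that restricts subgraph density in $G\cup\mathsf{G}$ --- for example, ruling out subgraphs whose edge-to-vertex ratio exceeds $1/\alpha-\delta$ --- which holds with probability $1-o(1)$ under both $\mathsf{P}$ and $\mathsf{Q}$ in the subcritical regime. Expanding
\[
\mathbb{E}_{\mathsf{P}}\bigl[L^2\mathbf{1}_{\mathcal{E}}\bigr] \;=\; \mathbb{E}_{\pi,\pi'}\mathbb{E}_{\mathsf{P}}\Bigl[\,\prod_e L_e(\pi)L_e(\pi')\,\mathbf{1}_{\mathcal{E}}\Bigr]
\]
and reorganizing by the permutation $\sigma = \pi(\pi')^{-1}$, the computation reduces to a weighted enumeration over subgraphs $H\subseteq K_n$ formed by the edges fixed (setwise) by $\sigma$, with weights of the form $\eta^{e(H)}/n^{v(H)}$ up to combinatorial factors counting labeled embeddings and involutive automorphisms, where $\eta\asymp s^2$ is the per-edge $\chi^2$ boost. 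The event $\mathcal{E}$ kills contributions from overly dense $H$, and the subcritical condition $nps^2\leq \lambda_*-\varepsilon$ guarantees that the remaining sum is bounded, yielding $\mathbb{E}_{\mathsf{P}}[L^2\mathbf{1}_{\mathcal{E}}] = 1+o(1)$. Combined with $\mathsf{Q}(\mathcal{E}^c) = o(1)$, a standard truncation inequality gives $\operatorname{TV}(\mathsf{P},\mathsf{Q}) = o(1)$.

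\textbf{Main obstacle.} The hard part in both directions will be matching the sharp constant $\lambda_* = \varrho^{-1}(1/\alpha)$ rather than only the order of magnitude. On the positive side this requires correctly identifying the test graph $H$ and controlling its higher moments despite heavy tails coming from rare, very dense subgraphs. On the negative side it requires carrying out the truncated second moment expansion with enough precision that the densest-subgraph functional $\varrho$ emerges cleanly from the weighted sum over $H$, and that the cutoff in $\mathcal{E}$ kills exactly the contributions above the threshold without erasing any genuine signal. Balancing the combinatorial enumeration of subgraph types against the weight $\eta^{e(H)}/n^{v(H)}$ at the threshold, rather than losing subpolynomial corrections, is the technical heart of the argument.
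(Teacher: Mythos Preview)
Your negative-direction outline is in the right spirit (a truncated second moment on the likelihood ratio, with a density-type truncation), but two key choices differ from what actually makes the argument go through, and your positive direction has a genuine gap.

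\textbf{Positive direction.} Your plan is to fix a finite ``balanced'' graph $H$ with $e(H)/v(H)>1/\alpha$ and count aligned copies. This cannot work at the sharp threshold. Under $\mathsf Q$ the $\pi^*$-intersection graph is $\mathbf G(n,\lambda/n)$ with $\lambda=nps^2$ constant; for any \emph{fixed} $H$ with $k$ vertices and $m$ edges and $m/k>1/\alpha\geq 1$ (so $m>k$), the expected number of copies of $H$ in $\mathbf G(n,\lambda/n)$ is $\Theta(n^{k-m})\to 0$. Thus no fixed dense $H$ appears, and your count is $0$ with high probability under both hypotheses. The point of $\varrho$ is that the subgraph of $\mathbf G(n,\lambda/n)$ realizing density $\varrho(\lambda)$ has \emph{linear} size (this is Proposition~\ref{prop:densest-subgraph-is-linear}); there is no finite witness. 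The paper's test statistic is accordingly not a subgraph count but the maximal edge-to-vertex ratio of the $\pi$-intersection graph over all bijections $\pi$ and all vertex subsets of size $\geq n/\log n$ (see \eqref{eq:test-statistic}). Under $\mathsf Q$ this exceeds $\varrho(\lambda_*+\varepsilon)$ by Proposition~\ref{prop:densest-subgraph}; under $\mathsf P$ a union bound over $\binom{n}{k}^2 k!$ partial bijections and a Chernoff estimate show it stays below $1/\alpha$.

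\textbf{Negative direction.} Two points. First, your truncation event $\mathcal E$ lives on $G\cup\mathsf G$, but $G$ and $\mathsf G$ each have average degree $nps\asymp n^{(1-\alpha)/2}\to\infty$, so they are not in the constant-degree regime and $\varrho$ does not govern their subgraph densities. The object that \emph{is} $\mathbf G(n,\lambda/n)$ is the $\pi^*$-intersection graph $\mathcal H_{\pi^*}$, and the paper's good event $\mathcal G$ is that $\mathcal H_{\pi^*}$ is ``admissible'' (bounded subgraph density $\leq\xi<1/\alpha$, bounded degree, etc.). This event depends on $\pi^*$, so one conditions the joint law $\mathcal Q$ on $(G,\mathsf G,\pi^*)$ rather than truncating a $(G,\mathsf G)$-measurable event. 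Second, your reorganization by $\sigma$ and ``edges fixed setwise'' misses the key splitting: for each $\sigma$ one separates edge orbits that lie \emph{entirely} inside $\mathcal H_{\pi^*}$ (these contribute $p^{-|O|}$ and must be controlled by the density truncation) from the remaining orbits, which are shown to contribute at most $1$ each via an exact eigenvalue computation for the $2\times 2$ transfer matrix of the edge likelihood (Lemma~\ref{lem:le1}). The combinatorics then reduces to enumerating possible realizations of the ``fully-contained'' orbit graph $H(\mathcal J_\sigma)$ as a disjoint union of small trees and non-tree components inside an admissible $\mathcal H_{\pi^*}$; the admissibility bounds on subgraph counts (Lemma~\ref{lem:subgraph-counts}) are what make the resulting product $1+o(1)$. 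Your sketch does not isolate this orbit dichotomy, and without it the $p^{-|\cdot|}$ factors cannot be tamed.
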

Our work is closely related to and much inspired by a recent work \cite{WXY20+}, where a sharp threshold was established for $\alpha = 0$ and upper and lower bounds on $\lambda_*$ up to a constant factor were established for $\alpha\in (0, 1]$. In particular, Theorem~\ref{thm1} solves \cite[Section 6, Open Problem 3]{WXY20+}. It is worth emphasizing that \eqref{eq:negative-<1} for $\alpha = 1$ with $np\to \infty$ was already proved in \cite{WXY20+} (which even allows $\epsilon \to 0$ as long as $\epsilon \gg n^{-1/3}$). While our method should also be able to give \eqref{eq:negative-<1} for $\alpha = 1$, we chose to exclude this case since the assumption $\alpha<1$ allows to avoid some technical complications. Thus, the only remaining case is when $np$ has order 1, in which there is no sharp phase transition.  Indeed, on the one hand,  for some constant $c>0$ satisfying $np\ge c$ and $s\ge c$ we have 
\begin{equation}\label{eq-lower-bound-s-positive}
	\liminf_{n\to\infty}\operatorname{TV}(\mathsf P,\mathsf Q)>0\,.
\end{equation} 
This follows readily by comparing the marginal distribution of the pair $$\left(\frac{|E|-\binom{n}{2}ps}{\sqrt{\binom{n}{2}ps(1-ps)}},\frac{|\mathsf E|-\binom{n}{2}ps}{\sqrt{\binom{n}{2}ps(1-ps)}}\right)$$ under $\mathsf P$ and $\mathsf Q$. A straightforward application of Central Limit Theorem yields that the marginal law of such pair is approximately a pair of independent normal variables with mean zero and variance 1 under $\mathsf P$, and in contrast is a pair of bivariate normal variables with mean zero, variance 1 and with correlation at least $c$ under $\mathsf Q$. On the other hand, as shown in \cite{WXY20+}, if $s\leq 0.1$ and $ n^{1/3}(1-nps^2 )\to \infty$ then $\limsup_{n\to\infty}\operatorname{TV}(\mathsf P,\mathsf Q) < 1$.

\noindent {\bf Background and related results.} Recently, there has been extensive study on the problem of detecting correlation between two random graphs and the closely related problem of matching the vertex correspondence in the presence of correlation. Questions of this type have been raised from various applied fields such as social network analysis \cite{NS08,NS09}, computer vision \cite{CSS07,BBM05}, computational biology \cite{SXB08,VCP15} and natural language processing \cite{HNM05}. 

Despite the fact that Erd\H{o}s-R\'enyi Graph perhaps does not quite capture important features for any network arising from realistic problems, (similar to most problems on networks) it is plausible that a complete understanding for the case of Erd\H{o}s-R\'enyi Graphs forms an important and necessary step toward the much more ambitious goal of mathematically understanding graph detection and matching problems for realistic networks arising from applications (note that for many applications it remains a substantial challenge to propose a reasonable underlying random graph model). Along this line, many progress has been made recently, including information-theoretic analysis \cite{CK16, CK17, HM20, WXY20+,WXY21+} and proposals for various efficient algorithms  \cite{PG11, YG13, LFP14, KHG15, FQRM+16, SGE17, BCL19, DMWX21, BSH19, CKMP19,DCKG19, MX20, FMWX20,  GM20, FMWX19+, MRT21+, MWXY21+}.  Out of these references, the ones closely related to our work include (the aforementioned) \cite{WXY20+} and \cite{WXY21+} which studied the information-theoretic threshold for the matching problem, as well as \cite{MWXY21+} which obtained an efficient algorithm for detection when the correlation between the two graphs is above a certain constant. As of now, a huge information-computation gap remains for both detection and matching problems, and it is a major challenge to completely understand the phase transition for the computational complexity for either detection or matching problems.

Recently, detection and matching problems have also been studied for models other than  Erd\H{o}s-R\'enyi. For instance, a model for correlated randomly growing graphs was studied in \cite{RS20+}, graph matching for correlated stochastic block model was studied in \cite{RS21} and graph matching for correlated random geometric graphs was studied in \cite{WWXY22+}. A very interesting direction is to design efficient algorithms for graph detection and matching that is robust to the underlying random graph models.

\noindent {\bf A connection to densest subgraph.} In \cite{WXY20+}, the authors used the maximal overlap between the two graphs over all vertex bijections as the testing statistic. This is a natural and likely efficient statistic, although it is not so easy to analyze the maximal overlap in the correlated case so \cite{WXY20+} lower-bounded it by the overlap given by the true matching. While this relaxation manages to capture the detection threshold in the dense regime (when $p = n^{o(1)}$), it only captures the threshold up to a constant factor in the sparse regime (when $p = n^{- \alpha+o(1)}$ for $\alpha>0$). Here is a brief description on the insights behind that guided this paper: In the dense regime, near the threshold the intersection of two correlated random graphs is an  Erd\H{o}s-R\'enyi with large degree and thus it is more ``regular'' in a sense that the densest subgraph is more or less as dense as the whole graph. In the sparse regime, on the one hand, the intersection is an  Erd\H{o}s-R\'enyi with constant degree and in this case the spatial fluctuation plays a non-negligible role such that the densest subgraph has significantly higher average degree than the whole graph; on the other hand, the maximal intersection of two independent random graphs (over all vertex bijections) is much more ``regular'' than an  Erd\H{o}s-R\'enyi such that its densest subgraph has about the same average degree as the whole graph (as proved in \eqref{eq-densest-subgraph-intersection-independent}). In summary, this suggests that a more efficient testing statistic is the maximal densest subgraph over all vertex bijections as defined in \eqref{eq:test-statistic}, which indeed yields the correct upper bound on the detection threshold (see Theorem~\ref{algo}). The major technical contribution of this paper is then to prove the sharp lower bound on the detection threshold (i.e., as in \eqref{eq:negative-<1}). To this end, we use a similar truncated second moment method as employed in \cite{WXY20+} with the additional insight that the truncation should be related to the densest subgraph. We will discuss more on this in Section~\ref{sec:impossible}.

Next we briefly describe the development on the densest subgraph for an Erd\H{o}s-R\'enyi graph. This problem arose in the study of load balancing problem \cite{Hajek90}, a particular example of which is to balance the loads when assigning  $m$ balls into $n$ bins subject to the constraint that each ball is assigned to either of two randomly chosen bins. The load balancing problem is also closely related to the emergence of a $k$-core in an Erd\H{o}s-R\'enyi graph (a $k$-core is a maximal connected subgraph  in which all vertices have degree at least $k$), and much progress has been made in \cite{CSW07, FR07, GW10, FKP16}. While \cite{CSW07, FR07} made significant progress in understanding the densest subgraph (note that \cite{GW10, FKP16} are in the context of hypergraphs), the asymptotic behavior for the maximal subgraph density of an Erd\H{o}s-R\'enyi graph (with average degree of order $1$) was only established in \cite{AS16}. In fact, the authors of \cite{AS16} managed to compute the asymptotic value for the maximal subgraph density of a random graph with prescribed degree sequence using the objective method from \cite{AS04}, and their result on Erd\H{o}s-R\'enyi graphs (see Proposition~\ref{prop:densest-subgraph}) is crucial for our work. 

\noindent\textbf{Acknowledgements. }We warmly thank Nicholas Wormald, Yihong Wu and Jiaming Xu for stimulating discussions. Hang Du is partially supported by the elite undergraduate training program of School of Mathematical Science in Peking University.

\medskip

\section{Detect correlation via densest subgraph}\label{sec:detection}

\subsection{The densest subgraph of an  Erd\H{o}s-R\'enyi graph}\label{sec:densest-subgraph}

The following result of \cite{AS16} provides an important input for the proof of Theorem~\ref{thm1}.

\begin{proposition}[\cite{AS16}, Theorem 1, Theorem 3]\label{prop:densest-subgraph}
	For any constant $\lambda>0$, there exists a constant $\varrho(\lambda)>0$ which can be explicitly written via a variational characterization, such that for an Erd\H{o}s-R\'enyi graph $\mathcal{H}(V,\mathcal{E})\sim \mathbf{G}(n,\frac{\lambda}{n})$, 
	\begin{equation}\label{eq-densest-subgraph}
		\max_{\emptyset\neq U\subset V} \frac{| \mathcal{E}(U)|}{|U|} \to \varrho(\lambda) \mbox{ in probability as } n\to\infty,
	\end{equation}
	where $ \mathcal E(U)$ is the collection of edges in $\mathcal E$ with both endpoints in $U$. Further, the function $\varrho(\cdot)$ satisfies 	\begin{equation}
		1\le \frac{\varrho(\beta)}{\varrho(\alpha)}\le \frac{\beta}{\alpha}, \ \forall\ 0<\alpha<\beta,
	\end{equation}
	hence $\varrho(\cdot)$ is continuous and increasing.
\end{proposition}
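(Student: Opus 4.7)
The plan is to apply the objective method of Aldous--Steele, exploiting the fact that $\mathbf{G}(n,\lambda/n)$ converges locally weakly to the Poisson Galton--Watson tree $\mathcal{T}_\lambda$ with offspring distribution $\mathrm{Pois}(\lambda)$. The first step is to rewrite $\max_U |\mathcal{E}(U)|/|U|$ in a form amenable to local analysis. By an LP relaxation, this quantity coincides (up to integrality corrections that vanish asymptotically) with the optimum of
\begin{equation*}
\max\bigg\{\sum_{e\in\mathcal{E}} y_e \;:\; y_e\ge 0,\; \sum_{e\ni v} y_e \le 1 \text{ for all } v\bigg\},
\end{equation*}
whose LP-dual is a minimum fractional vertex cover. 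Both are expressible as sums over vertices of locally defined message-passing fixed points. One then formulates a recursive distributional equation (RDE) on $\mathcal{T}_\lambda$ whose fixed point encodes these messages, proves existence and uniqueness/attractivity of this fixed point, and reads off from it both the limiting edge-count per vertex $L(\lambda)$ and the limiting fractional size of the optimizer $c(\lambda)$. The convergence $\max_U|\mathcal{E}(U)|/|U|\to L(\lambda)/c(\lambda)=:\varrho(\lambda)$ in probability then follows in the standard Aldous--Steele template, with a bounded-difference (Efron--Stein or Azuma) inequality applied to the edge indicator variables to upgrade convergence in expectation to convergence in probability.

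For the two-sided bound on $\varrho$, both directions can be obtained from couplings. The monotonicity $\varrho(\alpha)\le \varrho(\beta)$ for $\alpha<\beta$ follows at once from the standard monotone coupling $\mathbf{G}(n,\alpha/n)\subseteq \mathbf{G}(n,\beta/n)$, since inserting edges cannot decrease $|\mathcal{E}(U)|/|U|$ for any fixed $U$ and hence cannot decrease the maximum. For the sublinear scaling $\varrho(\beta)/\varrho(\alpha)\le \beta/\alpha$, a clean route is via independent thinning: realize $\mathbf{G}(n,\alpha/n)$ from $\mathbf{G}(n,\beta/n)$ by retaining each edge with probability $\alpha/\beta$; if $U^*_\beta$ achieves the densest-subgraph value in the $\beta$-graph (which, by the first part combined with $|U^*_\beta|/n\to c(\beta)$, has macroscopic size), then a Chernoff bound gives that the retained subgraph on $U^*_\beta$ has density $(\alpha/\beta)\varrho(\beta)+o(1)$, whence $\varrho(\alpha)\ge (\alpha/\beta)\varrho(\beta)$. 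Continuity of $\varrho$ then falls out of the two-sided sandwich $\alpha\varrho(\beta)\le\beta\varrho(\alpha)\le \beta\varrho(\beta)$.

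The principal obstacle is the RDE analysis on $\mathcal{T}_\lambda$: unlike the matching LP for which the corresponding RDE is by now classical, the densest-subgraph LP produces a fixed-point equation whose existence, uniqueness, and continuous dependence on $\lambda$ require careful probabilistic and analytic work (the objective method yields only subsequential limits until uniqueness is established independently). A secondary worry is ensuring that the densest subgraph is macroscopic with high probability: a priori, small high-density configurations such as cliques could in principle dominate, and one must show that no vanishing-size subgraph surpasses the density achieved at a macroscopic maximizer. Once these two points are in hand, the concentration step and the scaling inequalities are comparatively routine.
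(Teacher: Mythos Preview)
The paper does not prove this proposition: it is quoted directly from \cite{AS16} (their Theorems~1 and~3), with only a short remark explaining how to pass from the $G(n,m)$ model treated there to $\mathbf{G}(n,\lambda/n)$ via concentration of the total edge count. So there is no in-paper argument to compare your proposal against.

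That said, your sketch is a reasonable high-level outline of the objective-method strategy that \cite{AS16} actually implements, including the LP reformulation of the densest-subgraph problem and the recursive distributional equation on the Poisson Galton--Watson tree; you also correctly flag the RDE uniqueness as the substantive analytic obstacle. One genuine gap in your plan concerns the upper bound $\varrho(\beta)/\varrho(\alpha)\le\beta/\alpha$: your thinning-plus-Chernoff argument requires the $\beta$-densest subgraph $U^*_\beta$ to have size of order $n$, which you assert follows from ``$|U^*_\beta|/n\to c(\beta)$'' but never establish. In fact for $\beta\le 1$ the densest subgraph is a single cycle of bounded (or logarithmic) size, so $c(\beta)=0$ and the Chernoff step fails outright; the inequality still holds in that range because $\varrho\equiv 1$ there, but your route does not cover it. Even for $\beta>1$, the macroscopic size of the optimizer is not an automatic byproduct of the objective method---the present paper proves it separately in Proposition~\ref{prop:densest-subgraph-is-linear} via the $2$-core and a first-moment bound---so you would need to import or reprove that fact before your coupling argument goes through.
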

\begin{remark}
	Although \cite{AS16} treated Erd\H{o}s-R\'enyi graphs with $n$ vertices and $\lfloor\lambda n\rfloor$ edges for a fixed constant $\lambda>0$, counterparts of all results apply to the $\mathbf G(n, \frac{\lambda}{n})$ model since the total number of edges in $\mathbf G(n, \frac{\lambda}{n})$ concentrate around $\frac{\lambda n}{2}$. In addition, our definition of $\varrho$ is different from that in \cite{AS16} by a scaling factor $2$, i.e. $\varrho(\lambda)$  for us equals to $\varrho(\frac{\lambda}{2})$ for $\varrho$ in \cite{AS16}.
\end{remark}
We call the largest subgraph that maximizes the left hand side of \eqref{eq-densest-subgraph} (if there are many such subgraphs, pick one of them arbitrarily) as \emph{the densest subgraph}. In order to establish a sharp threshold phenomenon for graph detection, we need $\rho$ to be a \emph{strictly} increasing function, as proved in the following proposition. 
\begin{proposition}\label{prop:densest-subgraph-is-linear}
	For $\lambda>1$, $\varrho(\lambda)>1$ and $\varrho$ is strictly increasing. Furthermore, there exists some constant $c_\lambda>0$, such that  with probability tending to $1$ as $n\to \infty$, the size of the densest subgraph in an Erd\H{o}s-R\'enyi graph $\mathcal H\sim\mathbf{G}(n,\frac{\lambda}{n})$ is at least $c_\lambda n$. 
\end{proposition}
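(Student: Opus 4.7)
The three assertions naturally break down as (i) $\varrho(\lambda) > 1$ for $\lambda > 1$, (ii) the densest subgraph has linear size with high probability, and (iii) $\varrho$ is strictly increasing on $(1, \infty)$. The plan is to establish them in this order, since (ii) relies on (i), and (iii) relies on both.

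For (i), the natural witness of density strictly above $1$ is the 2-core $K$ of $\mathcal H$. Classical results on supercritical Erd\H{o}s--R\'enyi random graphs (e.g.\ due to \L{}uczak and Pittel--Spencer--Wormald) give that for $\lambda > 1$, $K$ has $\Theta(n)$ vertices and excess $|\mathcal E(K)| - |V(K)| = \Theta(n)$, because the giant component has linearly many surplus edges beyond a spanning tree and the peeling of degree-$1$ vertices that defines the 2-core removes only tree-like fringes (hence no cycles). Thus $|\mathcal E(K)|/|V(K)| \geq 1 + \delta_\lambda$ w.h.p.\ for some $\delta_\lambda > 0$, and \eqref{eq-densest-subgraph} yields $\varrho(\lambda) \geq 1 + \delta_\lambda$.

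For (ii), I would rule out small dense subgraphs via a first-moment union bound:
\begin{equation*}
\P\bigl[\exists\, U \subseteq V : |U| = k,\ |\mathcal E(U)| \geq (1 + \delta_\lambda/2)\,k\bigr] \leq \binom{n}{k} \binom{\binom{k}{2}}{\lceil (1 + \delta_\lambda/2)\, k\rceil} \Bigl(\frac{\lambda}{n}\Bigr)^{\lceil (1 + \delta_\lambda/2)\, k\rceil}.
\end{equation*}
Setting $k = x n$ and applying Stirling, the logarithm of the right side divided by $k$ equals $-(\delta_\lambda/2)\log(1/x) + O_\lambda(1)$, which is at most $-1$ whenever $x \leq c_\lambda$ for some $c_\lambda > 0$; each term is then bounded by $e^{-k}$, and summing over $k$ from a slowly growing threshold up to $c_\lambda n$ yields $o(1)$. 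The values of fixed $k$ are handled separately: for each fixed $k$ the above bound is $O_\lambda(n^{-(\delta_\lambda/2)\,k}) = o(1)$. Combined with Proposition \ref{prop:densest-subgraph} and step (i), the densest subgraph has density $\geq \varrho(\lambda) - o(1) > 1 + \delta_\lambda/2$ w.h.p., so its size must exceed $c_\lambda n$.

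For (iii), I would use the monotone coupling $G_\alpha \subseteq G_\beta$ of $\mathbf G(n, \alpha/n)$ and $\mathbf G(n, \beta/n)$, obtained by adding each non-edge of $G_\alpha$ independently with probability $(\beta - \alpha)/(n - \alpha)$. Let $U^*$ denote the densest subgraph of $G_\alpha$; by (ii), $|U^*| \geq c_\alpha n$ w.h.p. The number of added edges inside $U^*$ stochastically dominates $\mathrm{Bin}(\lfloor |U^*|^2/3 \rfloor,\ (\beta - \alpha)/n)$, whose mean is of order $(\beta - \alpha)\,|U^*|$; Chernoff then gives at least $c_\alpha(\beta - \alpha) |U^*|/6$ added edges w.h.p. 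Hence the density of $U^*$ in $G_\beta$ is at least $\varrho(\alpha) + c_\alpha(\beta - \alpha)/6 - o(1)$, and \eqref{eq-densest-subgraph} applied to $G_\beta$ gives $\varrho(\beta) \geq \varrho(\alpha) + c_\alpha(\beta - \alpha)/6 > \varrho(\alpha)$.

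The hard part is (ii): the union bound is classical, but combining it cleanly with the in-probability convergence of Proposition \ref{prop:densest-subgraph} and handling the constant-$k$ regime outside Stirling both require care. Step (i) is also nontrivial in that it invokes specific classical estimates on the 2-core of a supercritical Erd\H{o}s--R\'enyi graph; given (ii), the monotonicity step (iii) then follows as a short coupling argument.
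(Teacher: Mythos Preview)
Your proposal is correct and follows essentially the same three-step approach as the paper: (i) the 2-core of a supercritical $\mathbf G(n,\lambda/n)$ witnesses $\varrho(\lambda)>1$; (ii) a first-moment union bound shows that no subgraph of size below $c_\lambda n$ can have edge-vertex ratio exceeding a fixed $\rho>1$; (iii) a sprinkling/monotone-coupling argument, using the linear size from (ii), gives strict monotonicity. The only differences are cosmetic: the paper invokes the Chernoff bound \eqref{eq:chernoff} directly rather than Stirling on binomial coefficients in (ii), and in (iii) it phrases the coupling as adding an independent $\mathbf G(n,(\lambda_2-\lambda_1)/n)$ and using stochastic domination rather than the explicit monotone coupling---but these are equivalent formulations of the same argument.
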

It is readily to see that $\varrho(\lambda)=1$ for $\lambda\le 1$ (For $\lambda<1$ this follows from the well-known fact \cite{ER60} that with probability tending to 1, an Erd\H{o}s-R\'enyi graph in the sub-critical phase has a component of size of order $\log n$ and all components have at most one cycle; for $\lambda=1$ this follows from continuity). By Proposition~\ref{prop:densest-subgraph-is-linear}, we can define the inverse function $\varrho^{-1}:[1,\infty)\to [1,\infty)$, where we let $\varrho^{-1}(1)=1$. In the proof of Proposition~\ref{prop:densest-subgraph-is-linear} and some estimates later, we need the following Chernoff bound for Bernoulli variables (see \cite[Theorem 4.4]{MU05}): For $X\sim \operatorname{Bin}(n,p)$, denote $\mu=np$, then for any $\delta>0$, 
\begin{equation}\label{eq:chernoff}
	\P[X\ge (1+\delta)\mu]\le \exp\left(-\mu[(1+\delta)\log(1+\delta)-\delta]\right).
\end{equation} 
\begin{proof}[Proof of Proposition~\ref{prop:densest-subgraph-is-linear}]
	First we show $\varrho(\lambda)>1$ for any $\lambda>1$. This follows immediately from the fact that with probability tending to 1, the 2-core in $\mathcal H$ contains $(1-x)(1-\frac{x}{\lambda}+o(1))n$ vertices and $(1-\frac{x}{\lambda}+o(1))^2\frac{\lambda n}{2}$ edges, where $x\in (0,1)$ is given by the equation $xe^{-x}=\lambda e^{-\lambda}$ (See \cite[Theorem 3]{Pittel90} and see also e.g. \cite[Lemma 2.16]{AM22}).
	
	Next we show that with probability tending to 1, the number of vertices in the densest subgraph has at least $c_\lambda n$ vertices for some constant $c_\lambda>0$. Since $\varrho>1$, there exists a $\rho=\rho(\lambda)>1$ such that with probability tending to 1 the densest subgraph has edge-vertex ratio greater than $\rho$. Let $\mathcal{A}$
	be the event that there exists a subgraph of $\mathcal H$ which has at most $c_\lambda n$ vertices and has edge-vertex ratio greater than $\rho$. Then, by a simply union bound,
	\begin{align*}
		\P[\mathcal{A}]\le&\sum_{k\leq c_\lambda n}\binom{n}{k}\P\left[\operatorname{Bin}\left(\frac{k(k-1)}{2},\frac{\lambda}{n}\right)\ge \rho k\right]\\
		\stackrel{\eqref{eq:chernoff}}{\le}&\sum_{k\le c_\lambda n}\frac{n^k}{k!}\exp\left(-\rho k\log\left(\frac{2n}{\lambda(k-1)}\right)+\rho k\right)\\
		\leq& \sum_{k\le c_\lambda n}\exp\left(-(\rho-1)k\log \left(\frac{n}{k}\right)+Ck\right),
	\end{align*}
	where $\operatorname{Bin}(\frac{k(k-1)}{2},\frac{\lambda}{n})$ (similar notation applies below later) denotes a binomial variable (which is the distribution for the number of edges in a subgraph with $k$ vertices), and $C=C(\lambda,\rho)$ is a large constant. Once we choose $c_\lambda>0$ small enough such that $(\rho-1)\log c_\lambda^{-1}>C$, then $\P[\mathcal{A}]=o(1)$, yielding the desired result. 
	
	Now we are ready to show $\varrho$ is strictly increasing by a simple sprinkling argument. For any $1<\lambda_1<\lambda_2$, we have shown that the densest subgraph $A$ in $\mathcal H_{\lambda_1}\sim \mathcal{G}\left(n,\frac{\lambda_1}{n}\right)$  has at least $c_{\lambda_1} n$ vertices with probability tending to 1. We now independently sample another Erd\H{o}s-R\'enyi graph $\mathcal H_{\lambda_2-\lambda_1}\sim \mathcal{G}\left(n,\frac{\lambda_2-\lambda_1}{n}\right)$ in the same vertex set as for $\mathcal H_{\lambda_1}$, then with probability tending to 1, $\mathcal H_{\lambda_2-\lambda_1}$ contains more than $\frac{(\lambda_2-\lambda_1)c_{\lambda_1}^2}{4}n$ edges within $A$. Since $\mathcal H_{\lambda_2}$ stochastically dominates $\mathcal H_{\lambda_1}\cup \mathcal H_{\lambda_2-\lambda_1}$, this gives $\varrho(\lambda_2)>\varrho(\lambda_1)$.
\end{proof}

\subsection{Test graph correlation}

We next define our testing statistic. For any bijection $\pi: V \to \mathsf V$, we define the $\pi$-intersection graph $ {\mathcal H}_\pi$ of $G$ and $\mathsf G$ as
\begin{equation}\label{def:pi-intersection-graph}
	{\mathcal H}_\pi = (V, {\mathcal E}_\pi),\text{ where }(u,v)\in \mathcal{E}_\pi \mbox{ if and only if } G_{u, v} =  \mathsf G_{\pi(u), \pi(v)} = 1.
\end{equation}
Then our testing statistic is defined by
\begin{equation}\label{eq:test-statistic}
	\mathcal{T}(G,\mathsf G)\stackrel{\operatorname{\Delta}}{=}\max_\pi\max_{U\subset V:|U|\ge n/\log n}\frac{|{\mathcal{E}}_\pi(U)|}{|U|}.
\end{equation}
\begin{thm}\label{algo}
	Suppose $p=p(n)$ satisfies $p=n^{-\alpha+o(1)}$ for some $\alpha\in(0,1]$ as $n\to \infty$. Let $\lambda_*=\varrho^{-1}(\frac{1}{\alpha})$. For $s$ satisfying $nps^2\ge \lambda_*+\varepsilon$, let $\tau= \frac{\varrho(\lambda_*)+\varrho(\lambda_*+\varepsilon)}{2}$ and let $\mathcal{T}(G,\mathsf G)$ be defined as in \eqref{eq:test-statistic}. Then $\mathcal{T}(G,\mathsf G)$ with threshold $\tau$ achieves strong detection, i.e.
	\begin{equation}
		\mathsf Q[\mathcal{T}(G,\mathsf G)<\tau] + \mathsf P[\mathcal{T}(G,\mathsf G)\geq \tau] = o(1)\,.
	\end{equation}
\end{thm}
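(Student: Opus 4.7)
I would split the argument into completeness under $\mathsf Q$ and soundness under $\mathsf P$, with the soundness side being where the main technical work lies.

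For completeness, the key observation is that under $\mathsf Q$ the planted intersection graph $\mathcal H_{\pi^*}$ is itself Erd\H{o}s--R\'enyi: an edge $(u,v)$ lies in $\mathcal E_{\pi^*}$ iff $J_{u,v}=I_{u,v}=\mathsf I_{\pi^*(u),\pi^*(v)}=1$, so $\mathcal H_{\pi^*}\sim\mathbf G(n,ps^2)$ with $\lambda:=nps^2\ge\lambda_*+\varepsilon$. Proposition~\ref{prop:densest-subgraph-is-linear} then gives that its densest subgraph $U^*$ has $|U^*|\ge c_\lambda n\ge n/\log n$ with probability $1-o(1)$, while Proposition~\ref{prop:densest-subgraph} gives $|\mathcal E_{\pi^*}(U^*)|/|U^*|\to\varrho(\lambda)$ in probability. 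Since $\varrho(\lambda)\ge\varrho(\lambda_*+\varepsilon)>\tau$ by strict monotonicity, plugging $\pi=\pi^*$ and $U=U^*$ into \eqref{eq:test-statistic} would give $\mathcal T\ge\tau$ with probability $1-o(1)$.

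For soundness, under $\mathsf P$ the graphs $G$ and $\mathsf G$ are independent, so for any fixed $U\subset V$ with $|U|=k$ and any injection $\sigma:U\to\mathsf V$, $|\mathcal E_\sigma(U)|\sim\operatorname{Bin}(\binom{k}{2},p^2s^2)$. Using $kp^2s^2\le np^2s^2=p\cdot(nps^2)=n^{-\alpha+o(1)}$, I would apply \eqref{eq:chernoff} to get
\begin{equation*}
    \mathsf P\bigl[|\mathcal E_\sigma(U)|\ge\tau k\bigr]\le \Bigl(\tfrac{ekp^2s^2}{2\tau}\Bigr)^{\tau k}\le \exp\bigl(-\tau\alpha k\log n\,(1-o(1))\bigr).
\end{equation*}
Crucially, $\mathcal E_\pi(U)$ depends on $\pi$ only through the restriction $\pi|_U$, so the union bound over all bijections collapses to one over pairs $(U,\pi|_U)$, of which there are $\binom{n}{k}\cdot n!/(n-k)!=\binom{n}{k}^2 k!$ for each $k$. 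A direct estimate yields $\log\binom{n}{k}^2 k!\le (1+o(1))k\log n$ uniformly in $k\ge n/\log n$, so the union bound at size $k$ becomes $\exp(-k\log n\,(\tau\alpha-1-o(1)))$, which summed over $k\in[n/\log n,n]$ is $o(1)$.

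The only real obstacle is that this balancing act requires the strict inequality $\tau\alpha>1$, which is exactly what the definition $\lambda_*=\varrho^{-1}(1/\alpha)$ and the strict monotonicity of $\varrho$ from Proposition~\ref{prop:densest-subgraph-is-linear} guarantee: $\tau=\tfrac12(\varrho(\lambda_*)+\varrho(\lambda_*+\varepsilon))>\varrho(\lambda_*)=1/\alpha$. The cutoff $|U|\ge n/\log n$ in $\mathcal T$ is essential on the soundness side, since it caps the combinatorial entropy $\log\binom{n}{k}^2 k!$ at $(1+o(1))k\log n$, exactly matching the Chernoff exponent $\tau\alpha k\log n$ with the positive margin $\tau\alpha-1$. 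Beyond this balancing, the argument is just careful $o(1)$-bookkeeping of the Chernoff exponent and the counting factor.
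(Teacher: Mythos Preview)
Your proposal is correct and follows essentially the same route as the paper: completeness via Propositions~\ref{prop:densest-subgraph} and~\ref{prop:densest-subgraph-is-linear} applied to $\mathcal H_{\pi^*}\sim\mathbf G(n,\lambda/n)$, and soundness via the union bound $\sum_{k\ge n/\log n}\binom{n}{k}^2k!\,\P[\operatorname{Bin}(\binom{k}{2},(ps)^2)\ge\tau k]$ together with Chernoff, with the crucial reduction from $n!$ bijections to $\binom{n}{k}^2k!$ restrictions and the strict inequality $\tau\alpha>1$ furnishing the margin. Your bookkeeping of the exponents (writing the Chernoff tail as $(ekp^2s^2/(2\tau))^{\tau k}$ and the entropy as $(1+o(1))k\log n$) is just a cosmetic repackaging of the paper's $\frac{n^{2k}}{k!}\exp(-\tau k\log(\tfrac{2\tau n}{\lambda kp})+\tau k)$ computation.
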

Note that Theorem~\ref{algo} implies \eqref{eq:positive}.
\begin{proof}[Proof of Theorem~\ref{algo}]
	Denote $\lambda=nps^2$. By Proposition~\ref{prop:densest-subgraph-is-linear}, 
	\begin{equation}\label{eq-tau-rho-alpha}
		\frac{1}{\alpha}=\varrho(\lambda_*)<\tau<\varrho(\lambda_*+\varepsilon).
	\end{equation}	
	Under $\operatorname{H}_1$, we see that ${\mathcal{H}}_{\pi^*}$ is an Erd\H{o}s-R\'enyi graph $\mathcal{G}\left(n,\frac{\lambda}{n}\right)$. Since $\tau<\varrho(\lambda_*+\varepsilon)$,  by Propositions~\ref{prop:densest-subgraph} and \ref{prop:densest-subgraph-is-linear}, with probability $1-o(1)$ the densest subgraph of $\mathcal H_{\pi^*}$ has edge-vertex ratio at least $\tau$ and has at least $c_\lambda n\ge n/\log n$ many vertices. This implies that $\mathsf Q[\mathcal{T}(G,\mathsf G)<\tau]=o(1)$.
	
	Furthermore, by a union bound, we get that
	$$\mathsf{P}[\mathcal{T}(G,\mathsf G)\ge \tau]\le \sum_{k\ge n/\log n}\binom{n}{k}^2 k!\P\left[\operatorname{Bin}\left(\frac{k(k-1)}{2},(ps)^2\right)\ge \tau k\right], $$
	where $\binom{n}{k}^2$ accounts for the number of ways to choose $k$ vertices $A\subset V$ and $k$ vertices $\mathsf A\subset \mathsf V$, $k!$ accounts for the number of bijections between $A$ and $\mathsf A$. Crucially, we only need to take a union bound over $k!$ bijections since the subgraph of $\mathcal H_\pi$ on $A$, subject to the constraint that $\pi$ maps $A$ to $\mathsf A$, only depends on $\pi|_A$.
	Since $p=n^{-\alpha+o(1)}$, we have $\log(\frac{n}{kp})\ge[\alpha+o(1)]\log n$. In addition,
	$\tau>\frac{1}{\alpha}$ as in \eqref{eq-tau-rho-alpha}. Then, we get that for a small positive constant $\delta=\delta(\alpha,\tau)$,
	\begin{align*}	
		\mathsf{P}[\mathcal{T}(G,\mathsf G)\ge \tau]\stackrel{\eqref{eq:chernoff}}{\le} &\sum_{k\ge n/\log n}\frac{n^{2k}}{k!}\exp\left(-\tau k\log\left(\frac{2\tau n}{\lambda kp}\right)+\tau k\right)\\
		=&\sum_{k\ge n/\log n}\frac{n^{2k}}{k!}\exp\left(-(1+2\delta )k\log n+o(k\log n)\right).
	\end{align*}
	By a straightforward computation, we then get that when $n$ is large enough, 
	\begin{equation*}
		\mathsf{P}[\mathcal{T}(G,\mathsf G)\ge \tau]	\le \sum_{k\ge n/\log n}\frac{n^{(1-\delta)k}}{k!}\le \frac{n^{(1-\delta)\lfloor n/\log n\rfloor}}{\lfloor n/\log n\rfloor!}\sum_{t\ge 0}\left(\frac{n^{1-\delta}}{n/\log n}\right)^t 
		=\ o(1). \label{eq-densest-subgraph-intersection-independent}
	\end{equation*}
	This proves $\mathsf Q[\mathcal T(G,\mathsf G)< \tau]+\mathsf P[\mathcal{T}(G,\mathsf G)\geq \tau] = o(1)$ as required.
\end{proof}

\section{Impossibility for detection}\label{sec:impossible}

This section is devoted to the proof of \eqref{eq:negative-<1}.
The basic idea follows the framework of conditional second moment for the likelihood ratio as in \cite{WXY21+}, with the aforementioned additional key insight that the truncation should involve the densest subgraph. It turns out somewhat more convenient in our case to compute the conditional first moment for $\frac{\dif \mathsf Q}{\dif\mathsf P}$ when $(G, \mathsf G)\sim \mathsf Q$, which is equivalent to its second moment when $(G, \mathsf G)\sim \mathsf P$. We choose the first moment formulation since it is then convenient to consider truncation on the $\pi^*$-intersection graph $\mathcal H_{\pi^*}$ sampled according to $\mathsf Q$ (i.e., when the two graphs are correlated). 

We learned  from \cite{WXY21+} that when computing the moments of the likelihood ratio, the concept of \emph{edge orbits} (induced by a permutation) plays an important role. We streamline this intuition a little further and prove Lemma~\ref{lem:le1} in Section~\ref{subsec:conditional-second-moment-method}. In light of Lemma~\ref{lem:le1}, it is natural to separate the edge orbits depending on whether they are entirely contained in the $\pi^*$-intersection graph $\mathcal H_{\pi^*}$ (see \eqref{eq-def-J-sigma}). With  Lemma~\ref{lem:le1} at hand, the issue reduces to bounding the moment from edge orbits that are entirely in $\mathcal H_{\pi^*}$, which naturally calls for a truncation on $\mathcal H_{\pi^*}$. As a major difference between \cite{WXY21+} and our work, instead of truncating  $\mathcal H_{\pi^*}$ as a pseudo forest as in \cite{WXY21+}, we truncate on the maximal subgraph density for $\mathcal H_{\pi^*}$ and some other mild conditions on small subgraph counts in $\mathcal H_{\pi^*}$ (see Section~\ref{subsec:on-good-event}). In Section~\ref{subsec:exp-moment} we bound the truncated moment, where the truncation on the maximal subgraph density plays a crucial role since it rules out the possibility of creating a large number of edge orbits in $\mathcal H_{\pi^*}$ by only fixing the values of the bijection on a small number of vertices.

\subsection{The conditional second moment method}\label{subsec:conditional-second-moment-method}

Let $\mathcal{Q}$ be the probability measure on the sample space $\Omega=\{(G,\mathsf G,\pi^*)\}$ under $\operatorname{H}_1$, i.e. 
under $\mathcal{Q}$, $\pi^*$ is a uniform bijection and conditioned on $\pi^*$, $(G,\mathsf G)$ is a pair of correlated graphs sampled according to rule \eqref{eq:rule}. Note that $\mathsf Q$ is nothing but the marginal distribution on the first two coordinates of $\mathcal{Q}$. Our goal is to show $\operatorname{TV}(\mathsf P,\mathsf Q)=o(1)$. To this end, we consider the likelihood ratio

\begin{equation*}\label{eq-likelihood-ratio}
	L(G,\mathsf G)\stackrel{\operatorname{\Delta}}{=}\frac{\dif\mathsf Q}{\dif\mathsf P}\mid_{(G,\mathsf G)}=\frac{\sum_{\pi}\mathcal{Q}[G,\mathsf G,\pi]}{\mathsf{P}[G,\mathsf G]}=\frac{1}{n!}\sum_{\pi}\frac{\mathcal{Q}[G,\mathsf G\mid \pi]}{\mathsf P[G,\mathsf G]}\,,
\end{equation*}
where $\mathcal{Q}[G,\mathsf G\mid \pi]$ is a short notation for $\mathcal{Q}[G,\mathsf G\mid \pi^* =  \pi]$.
Ideally we wish to show $\mathbb{E}_\mathsf{Q}L=\mathbb{E}_\mathsf{P}L^2=1+o(1)$, but this fails due to the contribution from certain rare event. Therefore, we turn to the conditional moment for the likelihood ratio where we choose some ``good'' event $\mathcal{G}$  measurable with respect to $(G,\mathsf G,\pi^*)$ such that $\mathcal{Q}[\mathcal G]=1-o(1)$. 
Let $\mathcal{Q}'[\cdot]=\mathcal{Q}[\cdot\mid \mathcal{G}]$ and $\mathsf Q'$ be the marginal distribution of the first two coordinates of $\mathcal{Q}'$. We further assume that the marginal distribution of $\pi^*$ under $\mathcal{Q}'$ is still uniform, then the conditional likelihood ratio is then given by 
\begin{equation*}
	L'(G,\mathsf G)\stackrel{\operatorname{\Delta}}{=}\frac{\dif\mathsf Q'}{\dif\mathsf P}\mid_{(G,\mathsf G)}=\frac{\sum_{\pi}\mathcal{Q'}[G,\mathsf G,\pi]}{\mathsf{P}[G,\mathsf G]}=\frac{1}{n!}\sum_{\pi}\frac{\mathcal{Q'}[G,\mathsf G\mid \pi]}{\mathsf P[G,\mathsf G]}\,.
\end{equation*}
By the data processing inequality and the assumption that $\mathcal Q[\mathcal G]=1-o(1)$,
\begin{equation*}
	\operatorname{TV}(\mathsf Q,\mathsf Q')\le \operatorname{TV}(\mathcal Q,\mathcal Q')=o(1)\,.
\end{equation*}
As a result, once we show that $\mathbb{E}_{\mathsf Q'}L'=\mathbb{E}_\mathsf{P}L'^2=1+o(1)$ for some appropriately chosen good event $\mathcal{G}$, then by the triangle inequality,
\begin{equation*}\label{eq:bound-1}
	\begin{aligned}
		\operatorname{TV}(\mathsf P,\mathsf Q)\le &\operatorname{TV}(\mathsf P,\mathsf Q')+\operatorname{TV}(\mathsf Q',\mathsf Q)
		\le \sqrt{\log \mathbb{E}_{\mathsf Q'}L'}+o(1)=o(1)\,,
	\end{aligned}
\end{equation*}
where the second inequality follows from  Pinsker's inequality and Jensen's inequality.

As we will see later, our good event $\mathcal G$ will be measurable with respect to the isomorphic class of the $\pi^*$-intersection graph of $G$ and $\mathsf G$, thus $\pi^*$ does have uniform distribution under $\mathcal{Q}'$. In addition, in what follows, all the probability analysis conditioned on $\pi^*$ is invariant with the realization of $\pi^*$. In particular, we can write 
$$\E_\mathsf {Q'} L' = \E_{\pi^*\sim \mathcal {Q'}} \E_{(G, \mathsf G)\sim \mathcal Q'[\cdot \mid \pi^*]} L',$$ 
where $\E_{(G, \mathsf G)\sim \mathcal Q'[\cdot \mid \pi^*]} L'$ is invariant of the realization of $\pi^*$. As a result, for convenience of exposition, in what follows we regard $\pi^*$ as certain fixed bijection from $V$ to $\mathsf V$ in order to avoid unnecessary complication from another layer of randomness.

We postpone the definition of our good event $\mathcal{G}$ in Section~\ref{subsec:on-good-event}, and we next investigate the conditional likelihood ration $L'(G,\mathsf G)$ more carefully. 
For any bijection $\pi:V\to\mathsf V$, it is easy to see
\begin{equation}\label{eq:likelihood-ratiooo}
	\frac{\mathcal Q[G,\mathsf G\mid \pi]}{\mathsf P[G,\mathsf G]}=\prod_{(u,v)\in E_0}\ell(G_{u,v},\mathsf G_{\pi(u),\pi(v)})\,,
\end{equation}
where $\ell:\{0,1\}\times \{0,1\}\to \mathbb{R}$ denotes for the likelihood ratio function for a pair of edges given by
\begin{equation}\label{eq:function-ell}
	\ell(x,y)=\begin{cases}
		\frac{1-2ps+ps^2}{(1-ps)^2}\quad &x=y=0\,;\\
		\frac{1-s}{1-ps}\quad &x=1, y=0\text{ or }x=0, y=1\,;\\
		\frac{1}{p}&x=y=1\,.
	\end{cases}
\end{equation}

Now suppose we are under $\operatorname{H}_1$. For a bijection $\pi:V\to \mathsf V$, define a permutation on $V$ by $\sigma\stackrel{\Delta}{=}\pi^{-1}\circ \pi^*$. Then $\pi$ (respectively $\sigma$) induces a bijecion $\Pi$ from $E_0$ to $\mathsf E_0$ (respectively a permutation $\Sigma$ on $E_0$), given by $\Pi((u,v))=(\pi(u),\pi(v))$ (respectively $\Sigma((u,v))=(\sigma(u),\sigma(v))$).
For a given permutation $\sigma$ on $V$, let $\mathcal{O}_\sigma$ be the set of edge orbits induced by $\Sigma$, and we define
\begin{equation}\label{eq-def-J-sigma}
	\begin{aligned}
		\mathcal{J}_\sigma=
		\{O\in \mathcal{O}_\sigma:G_{u,v}=\mathsf G_{\pi^*(u),\pi^*(v)}=1,\mbox{ for all }(u,v)\in O\}
	\end{aligned}
\end{equation}
to be the set of edge orbits that are \emph{entirely} contained in $\mathcal{H}_{\pi^*}$  (recall \eqref{def:pi-intersection-graph} for the definition of $\pi^*$-intersection graph $\mathcal{H}_{\pi^*}$). 
Note that $\mathcal{O}_\sigma$ is deterministic for a given $\sigma$, while $\mathcal{J}_\sigma$ is random depending on $\pi^*$ (which was assumed to be fixed) and the realization of $(G,\mathsf G)$. Let $H(\mathcal J_\sigma)$ be the subgraph of $\mathcal{H}_{\pi^*}$ with vertices and edges from orbits in $\mathcal{J}_\sigma$ and with slight abuse of notation, we denote by $|\mathcal{J}_\sigma|$ the number of edges in $H(\mathcal J_\sigma)$.

It is clear that once $\pi^*$ and $\sigma=\pi^{-1}\circ \pi^*$ are fixed, the collections of $\{\ell(G_{e},\mathsf{G}_{\Pi(e)}): e \in O\}$ are mutually independent for $O \in \mathcal{O}_\sigma$. In addition, for $O \in \mathcal J_\sigma$ we have
\begin{equation}\label{eq:entire-orbit}
	\prod_{e \in O}\ell(G_{e},\mathsf{G}_{\Pi(e)})=p^{-|O|}\,,
\end{equation}
where $|O|$ denotes for the number of edges $e\in O$. The contribution to the untruncated moment from $O\in \mathcal J_\sigma$ blows up and thus calls for a truncation. Before doing that, we first prove the following lemma (similar to \cite[Proposition 3]{WXY20+}) which controls contribution from $O\notin \mathcal{J}_\sigma$.
\begin{lemma} \label{lem:le1}
	Let $\pi=\pi^*\circ \sigma^{-1}$ (the notations for $\pi^*, \pi, \sigma$ and $\Pi, \Sigma$ are consistent as above). For $p,s\le 0.1$ and $O\in \mathcal{O}_\sigma$,
	\begin{equation}\label{eq:lem1}
		\mathbb{E}_{(G,\mathsf G)\sim \mathcal{Q}[\cdot\mid \pi^*]}\Big[\prod_{e\in O}\ell(G_{e},\mathsf{G}_{\Pi(e)})\mid O\notin  \mathcal{J_\sigma}\Big]\le 1\,.
	\end{equation}
\end{lemma}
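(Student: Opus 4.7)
The plan is to reduce the expectation to the trace of a $2\times 2$ transfer matrix and analyze its spectrum. Write $k = |O|$ and label the edges $e_1,\dots,e_k$ so that $\Sigma(e_i) = e_{(i\bmod k)+1}$. Since $\Pi = \pi^* \circ \Sigma^{-1}$, the edge $e_i$ is sent by $\Pi$ to $\pi^*(e_{i-1\bmod k})$, so the product we need to control becomes
\[
T \;\stackrel{\Delta}{=}\; \prod_{e \in O} \ell(G_e, \mathsf G_{\Pi(e)}) \;=\; \prod_{i=1}^{k} \ell(Y_i, Z_{i-1\bmod k}), \qquad (Y_i, Z_i) \stackrel{\Delta}{=} (G_{e_i}, \mathsf G_{\pi^*(e_i)}).
\]
Under $\mathcal Q[\cdot\mid\pi^*]$ the pairs $(Y_i,Z_i)$ are i.i.d.\ with the joint law determined by the subsampling rule \eqref{eq:rule}, so we are in a clean stationary regime.

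Next I would introduce the transfer matrix $M(z',z) = \sum_{y\in\{0,1\}} \ell(y,z')\,P(Y=y, Z=z)$ on $\{0,1\}^2$; then a standard cyclic trace identity gives $\mathbb E[T] = \operatorname{tr}(M^k)$. The key structural fact I would exploit is that the all-ones vector $\mathbf 1 = (1,1)^\top$ is a right eigenvector of $M$ with eigenvalue $1$: summing over $z$ marginalizes $Z$ out and leaves $\sum_z M(z',z) = \mathbb E_{\mathsf P}[\ell(Y,z')] = 1$, because $\ell$ is the single-edge likelihood ratio of $\mathsf Q$ against $\mathsf P$ and the two marginals coincide. Hence $\operatorname{tr}(M^k) = 1 + \lambda^k$ with $\lambda := \operatorname{tr}(M) - 1$.

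A direct expansion using \eqref{eq:function-ell} and the joint law of $(Y,Z)$ should yield, after simplification,
\[
\lambda \;=\; s^2 - \frac{ps^2(1-s)\bigl(2 - p(1+s)\bigr)}{(1-ps)^2},
\]
and for $p,s \le 0.1$ the crude bounds $(1-s)(2-p(1+s)) \ge 0.9\cdot 1.89 > 1 \ge (1-ps)^2$ deliver $0 \le \lambda \le s^2(1-p)$. Combined with the elementary inequality $(1-p)^k + p^k \le 1$ valid on $[0,1]$, this gives $\mathbb E[T] \le 1 + s^{2k}(1-p)^k \le 1 + s^{2k} - (ps^2)^k$. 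Since the event $\{O \in \mathcal J_\sigma\}$ forces $(Y_i,Z_i)=(1,1)$ for all $i$, it has probability $(ps^2)^k$ and contributes $p^{-k}$ to $T$, so $\mathbb E[T\,\mathbbm 1_{O \in \mathcal J_\sigma}] = s^{2k}$. Subtracting,
\[
\mathbb E[T\,\mathbbm 1_{O \notin \mathcal J_\sigma}] \;\le\; 1 - (ps^2)^k \;=\; \mathbb P[O \notin \mathcal J_\sigma],
\]
which is precisely \eqref{eq:lem1} after dividing by $\mathbb P[O \notin \mathcal J_\sigma]$.

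The only genuinely delicate step is the arithmetic verification that $\lambda \le s^2(1-p)$ under $p,s \le 0.1$. The factor $(1-p)$ is exactly the slack that, via $(1-p)^k \le 1-p^k$, lets us absorb the $-(ps^2)^k$ correction uniformly in the orbit length $k$; without it the transfer-matrix bound would fall short of the target.
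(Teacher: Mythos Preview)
Your argument is correct and follows essentially the same strategy as the paper: compute the unconditional expectation via a transfer matrix, subtract off the contribution from the event $\{O\in\mathcal J_\sigma\}$, and divide. The only cosmetic difference is that you stay under $\mathcal Q[\cdot\mid\pi^*]$ and obtain a $k$-step trace $\operatorname{tr}(M^k)=1+\lambda^k$, whereas the paper passes to $\mathsf P$ and obtains a $2k$-step trace $1+\rho^{2k}$ with $\rho=s(1-p)/(1-ps)$; in fact your $\lambda$ equals $\rho^2$, as one checks from your displayed formula via $(1-ps)^2-(1-p)^2=p(1-s)\bigl(2-p(1+s)\bigr)$.

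One small slip: the ``crude bounds'' you quote only show that the subtracted term in your formula for $\lambda$ is at least $ps^2$, hence $\lambda\le s^2(1-p)$; they do \emph{not} yield $\lambda\ge 0$, which you need to pass from $\lambda\le s^2(1-p)$ to $\lambda^k\le s^{2k}(1-p)^k$ for all $k$. The missing lower bound is immediate once one recognizes $\lambda=\rho^2$, or alternatively from the complementary crude estimate that the subtracted term is at most $2ps^2/(1-ps)^2<s^2$ for $p\le 0.1$.
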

\begin{proof}
	We first compute the expectation without conditioning on $O\not\in \mathcal J_\sigma$ as follows:
	\begin{equation*}
		\mathbb{E}_{(G,\mathsf G)\sim \mathcal Q[\cdot\mid \pi^*]}\prod_{e\in O}\ell(G_e,\mathsf G_{\Pi(e)})=\mathbb{E}_{(G,\mathsf G)\sim \mathsf{P}}\prod_{e\in O}\ell(G_e,\mathsf G_{\Pi(e)})\ell(G_{\Sigma(e)},\mathsf G_{\Pi(e)})\,.
	\end{equation*}
	The right hand side above can be further interpreted as $\operatorname{Tr}(\mathcal{L}^{2|O|})$, where $\mathcal{L}$ is the integral operator on the space of real functions on $\{0,1\}$ induced by the kernel $\ell(\cdot,\cdot)$ in \eqref{eq:function-ell} as
	\begin{equation*}
		(\mathcal{L}f)(x)\stackrel{\Delta}{=}\mathbb{E}_{\mathsf G_{\mathsf e}\sim \mathsf P}[\ell(x,\mathsf G_{\mathsf e})f(\mathsf G_{\mathsf e})]=\mathbb{E}_{(G_e,\mathsf G_{\mathsf e})\sim \mathsf Q}[f(\mathsf G_\mathsf{e})\mid G_e=x]\,.
	\end{equation*}
	The matrix form of $\mathcal{L}$ is given by $M(x,y)=\ell(x,y)\cdot\mathsf P[\mathsf G_{\mathsf e}=y]$ for$(x,y)\in \{0,1\}\times\{0,1\}$, which can be written explicitly as
	\begin{equation*}
		M=\left(\begin{matrix}
			\frac{1-2ps+ps^2}{1-ps}\ &\frac{ps(1-s)}{1-ps}\\
			1-s\ &s
		\end{matrix}\right)\,.
	\end{equation*} 
	$M$ has two eigenvalues $1$ and $\rho\stackrel{\Delta}{=}\frac{s(1-p)}{1-ps}$, so the unconditional expectation equals to $1+\rho^{2|O|}$. See also \cite[Proposition 1]{WXY20+} for details.
	
	Since conditioned on $O \notin \mathcal{J}_\sigma$ only excludes the case that $G_e=\mathsf G_{\Pi(e)}=1$ for all  $e\in O$, 
	\begin{equation*}
		\mathbb{E}_{(G,\mathsf G)\sim \mathcal{Q}[\cdot\mid \pi^*]}\Big[\prod_{e\in O}\ell(G_{e},\mathsf{G}_{\Pi(e)})\mid O\notin  \mathcal{J_\sigma}\Big]=\frac{1+\rho^{2|O|}-s^{2|O|}}{1-(ps^2)^{|O|}}\le 1\,,
	\end{equation*} 
	where the last inequality follows because for any $0<p,s\le 0.1$ and $k\ge 1$,
	\begin{equation*}
		s^{2k}-\rho^{2k}=s^{2k}\Big(1-\left(\frac{1-p}{1-ps}\right)^{2k}\Big)\ge s^{2k}\Big(1-(1-p)^k\Big)\ge (ps^2)^{k}. \qedhere
	\end{equation*}
\end{proof}
We are now ready to derive the next lemma.
\begin{lemma}\label{lem-exp-moment}
	With notations in this subsection, we have
	\begin{equation}\label{eq:exp-moment}
		\mathbb{E}_{(G,\mathsf G)\sim \mathsf Q'}L'(G,\mathsf G) \leq \frac{1}{\mathcal Q[\mathcal G]}\mathbb{E}_{(G,\mathsf G,\pi^*)\sim \mathcal{Q}'}\frac{1}{n!}\sum_{\sigma}\frac{p^{-|\mathcal{J}_\sigma|}}{\mathcal Q[\mathcal G\mid\pi^*,  \mathcal J_\sigma]}\,.
	\end{equation}
\end{lemma}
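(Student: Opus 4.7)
The plan is to reduce both sides of the claimed inequality to the single quantity $\frac{1}{n!(\mathcal Q[\mathcal G])^2}\sum_\sigma\mathbb E_{\mathcal Q}[p^{-|\mathcal J_\sigma|}]$: the left hand side will be upper bounded by it via Lemma~\ref{lem:le1}, while the right hand side will equal it via a tower-property computation.

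For the left hand side, I would first use the fact that $\mathcal G$ is measurable with respect to the isomorphism class of $\mathcal H_{\pi^*}$ (so $\mathcal Q[\mathcal G\mid \pi^*]=\mathcal Q[\mathcal G]$) to rewrite $L'(G,\mathsf G)=\frac{1}{n!\mathcal Q[\mathcal G]}\sum_\pi\frac{\mathcal Q[G,\mathsf G\mid\pi]}{\mathsf P[G,\mathsf G]}\mathbf{1}_{\mathcal G(\pi)}$, where $\mathbf{1}_{\mathcal G(\pi)}$ denotes the indicator that $\mathcal G$ holds for the graph $\mathcal H_\pi$. Plugging this into $\mathbb E_{\mathsf Q'}L'=\mathbb E_{\mathcal Q}[L'\mathbf{1}_{\mathcal G}]/\mathcal Q[\mathcal G]$, dropping $\mathbf{1}_{\mathcal G(\pi)}\le 1$, and changing variable to $\sigma=\pi^{-1}\circ \pi^*$, the product formula \eqref{eq:likelihood-ratiooo} combined with the orbit decomposition and identity \eqref{eq:entire-orbit} yields
\[
\mathbb E_{\mathsf Q'}L'\le \frac{1}{n!(\mathcal Q[\mathcal G])^2}\sum_\sigma \mathbb E_{\mathcal Q}\Big[p^{-|\mathcal J_\sigma|}\prod_{O\in \mathcal O_\sigma\setminus \mathcal J_\sigma}\prod_{e\in O}\ell(G_e,\mathsf G_{\Pi(e)})\mathbf{1}_{\mathcal G(\pi^*)}\Big].
\]
To close the argument I would condition on $(\pi^*,\mathcal J_\sigma)$: under $\mathcal Q[\cdot\mid \pi^*]$ the edge pairs $\{(G_e,\mathsf G_{\pi^*(e)})\}_{e\in E_0}$ are independent, so the orbit products $\{\prod_{e\in O}\ell(G_e,\mathsf G_{\Pi(e)}):O\in\mathcal O_\sigma\}$ are mutually independent, and this factorization survives conditioning on $\mathcal J_\sigma$ because $\{O\in \mathcal J_\sigma\}$ depends only on the edge pairs indexed by $e\in O$. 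After bounding the residual $\mathbf{1}_{\mathcal G(\pi^*)}\le 1$, Lemma~\ref{lem:le1} bounds each conditional orbit expectation by $1$, producing $\mathbb E_{\mathsf Q'}L'\le \frac{1}{n!(\mathcal Q[\mathcal G])^2}\sum_\sigma \mathbb E_{\mathcal Q}[p^{-|\mathcal J_\sigma|}]$.

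For the right hand side, since $p^{-|\mathcal J_\sigma|}/\mathcal Q[\mathcal G\mid \pi^*,\mathcal J_\sigma]$ is $\sigma(\pi^*,\mathcal J_\sigma)$-measurable, the tower property gives
\[
\mathbb E_{\mathcal Q'}\Big[\frac{p^{-|\mathcal J_\sigma|}}{\mathcal Q[\mathcal G\mid \pi^*,\mathcal J_\sigma]}\Big]=\frac{1}{\mathcal Q[\mathcal G]}\mathbb E_{\mathcal Q}\Big[\frac{p^{-|\mathcal J_\sigma|}}{\mathcal Q[\mathcal G\mid \pi^*,\mathcal J_\sigma]}\cdot \mathcal Q[\mathcal G\mid \pi^*,\mathcal J_\sigma]\Big]=\frac{\mathbb E_{\mathcal Q}[p^{-|\mathcal J_\sigma|}]}{\mathcal Q[\mathcal G]},
\]
so averaging over $\sigma$ confirms that the right hand side of the lemma equals the same quantity that upper bounds the left hand side. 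The one subtle point I expect to require care is verifying that conditioning on $\mathcal J_\sigma$ genuinely factorizes over orbits, which rests on the observation that membership $\{O\in \mathcal J_\sigma\}$ is determined by the edge pairs for $e\in O$ alone, so that the conditional law of $\{(G_e,\mathsf G_{\pi^*(e)})\}_{e\in O}$ given $\mathcal J_\sigma$ is exactly the $\mathcal Q[\cdot\mid \pi^*]$-law of that orbit conditioned on its own status.
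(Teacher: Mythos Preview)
Your tower-property step for the right hand side is where the argument breaks. You write
\[
\mathbb E_{\mathcal Q'}\Big[\frac{p^{-|\mathcal J_\sigma|}}{\mathcal Q[\mathcal G\mid \pi^*,\mathcal J_\sigma]}\Big]
=\frac{1}{\mathcal Q[\mathcal G]}\mathbb E_{\mathcal Q}\Big[\frac{p^{-|\mathcal J_\sigma|}}{\mathcal Q[\mathcal G\mid \pi^*,\mathcal J_\sigma]}\cdot \mathcal Q[\mathcal G\mid \pi^*,\mathcal J_\sigma]\Big]
=\frac{\mathbb E_{\mathcal Q}[p^{-|\mathcal J_\sigma|}]}{\mathcal Q[\mathcal G]},
\]
but the last equality is not valid: under $\mathcal Q$ the event $\{\mathcal Q[\mathcal G\mid\pi^*,\mathcal J_\sigma]=0\}$ has positive probability (e.g.\ take $\sigma=\mathrm{id}$, so $H(\mathcal J_\sigma)=\mathcal H_{\pi^*}$, which fails admissibility with positive probability). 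The correct identity is
\[
\mathbb E_{\mathcal Q'}\Big[\frac{p^{-|\mathcal J_\sigma|}}{\mathcal Q[\mathcal G\mid \pi^*,\mathcal J_\sigma]}\Big]
=\frac{1}{\mathcal Q[\mathcal G]}\,\mathbb E_{\mathcal Q}\big[p^{-|\mathcal J_\sigma|}\mathbf 1_{\{\mathcal Q[\mathcal G\mid\pi^*,\mathcal J_\sigma]>0\}}\big],
\]
which is strictly \emph{smaller} than $\mathbb E_{\mathcal Q}[p^{-|\mathcal J_\sigma|}]/\mathcal Q[\mathcal G]$. So you end up with $\text{LHS}\le A$ and $\text{RHS}\le A$ for $A=\frac{1}{n!(\mathcal Q[\mathcal G])^2}\sum_\sigma\mathbb E_{\mathcal Q}[p^{-|\mathcal J_\sigma|}]$, and that does not yield $\text{LHS}\le\text{RHS}$.

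The issue originates earlier, when you bound $\mathbf 1_{\mathcal G(\pi^*)}\le 1$ before applying Lemma~\ref{lem:le1}. Throwing away $\mathbf 1_{\mathcal G}$ at that point loosens the LHS bound past what the RHS can match. The paper's route avoids this by keeping the outer expectation under $\mathcal Q'$ through the conditioning on $\mathcal J_\sigma$: one writes the inner conditional expectation under $\mathcal Q'[\cdot\mid\pi^*,\mathcal J_\sigma]$, and only then switches to $\mathcal Q[\cdot\mid\pi^*,\mathcal J_\sigma]$ at the cost of the factor $1/\mathcal Q[\mathcal G\mid\pi^*,\mathcal J_\sigma]$, after which Lemma~\ref{lem:le1} applies directly. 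That factor is precisely what lands you on the RHS of \eqref{eq:exp-moment} rather than on the larger quantity $A$. Your orbit-independence observation and invocation of Lemma~\ref{lem:le1} are fine; the fix is purely in the bookkeeping of when you pass from $\mathcal Q'$ to $\mathcal Q$.
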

\begin{proof}
	First note that $\mathcal Q'[G,\mathsf G,\pi]\le \frac{\mathcal Q[G,\mathsf G,\pi]}{\mathcal Q[\mathcal G]}$ holds for any triple $(G,\mathsf G,\pi)$. Thus,
	\begin{align}
		&\mathbb{E}_{(G,,\mathsf G)\sim \mathsf Q'}L'(G,\mathsf G)=\mathbb{E}_{(G,\mathsf G)\sim \mathsf Q'}\sum_{\pi}\frac{\mathcal Q'[G,\mathsf G,\pi]}{\mathsf P[G,\mathsf G]}\nonumber\\
		\le&\ \frac{1}{\mathcal Q[\mathcal G]}\mathbb{E}_{(G,\mathsf G)\sim \mathsf Q'}\sum_{\pi}\frac{\mathcal Q[G,\mathsf G,\pi]}{\mathsf P[G,\mathsf G]}
		\stackrel{\eqref{eq:likelihood-ratiooo}}{=} \frac{1}{\mathcal Q[\mathcal G]}\mathbb{E}_{(G,\mathsf G)\sim\mathsf Q'}\frac{1}{n!}\sum_{\pi}\prod_{e\in E_0}\ell(G_{e},\mathsf G_{\Pi(e)})\nonumber\\
		=&\ \frac{1}{\mathcal Q[\mathcal G]} \mathbb{E}_{\pi^*\sim \mathcal{Q}'}\frac{1}{n!}\sum_{\sigma}\mathbb{E}_{(G,\mathsf G)\sim \mathcal{Q}'[\cdot\mid \pi^*]}\prod_{O\in \mathcal{O}_\sigma}\prod_{e\in O}\ell(G_e,\mathsf G_{\Pi(e)})\,,\label{eq:sum-over-all-pi}
	\end{align}
	where in the last equity we changed from summation over $\pi$ to summation over $\sigma=\pi^{-1}\circ\pi^{*}$. By Lemma~\ref{lem:le1}, we can take conditional expectation with respect to $\mathcal J_\sigma$ for each $\sigma$ and obtain that
	\begin{align}
		&\ \mathbb{E}_{(G,\mathsf G)\sim \mathcal{Q}'[\cdot\mid \pi^*]}\prod_{O\in \mathcal{O}_\sigma}\prod_{e\in O}\ell(G_e,\mathsf G_{\Pi(e)})\nonumber\\
		\stackrel{\eqref{eq:entire-orbit}}{=}&\ \mathbb{E}_{\mathcal{J}_\sigma\sim \mathcal{Q'}[\cdot\mid \pi^*]}\left[p^{-|\mathcal{J}_\sigma|}\mathbb{E}_{(G,\mathsf G)\sim \mathcal{Q'}[\cdot\mid \pi^*,\mathcal{J_\sigma}]}\left[\prod_{O\in \mathcal{O}_\sigma\setminus \mathcal{J}_\sigma}\prod_{e\in O}\ell(G_e,\mathsf G_{\Pi(e)})\mid \mathcal{J}_\sigma\right]\right]\,. \label{eq-condition-on-J-sigma}
	\end{align}
	Note that for each fixed $\sigma$, and any realization $J$ of $\mathcal{J}_\sigma$, 
	\begin{align}\label{eq-Q-to-Q'}
		&\ \mathbb{E}_{(G,\mathsf G)\sim\mathcal{Q}'[\cdot\mid \pi^*,\mathcal{J_\sigma}=J]}\left[\prod_{O\in \mathcal{O}_\sigma\setminus \mathcal {J}_\sigma}\prod_{e\in O}\ell(G_e,\mathsf G_{\Pi(e)})\mid \mathcal J_\sigma=J\right]\nonumber\\
		\le&\ \frac{1}{\mathcal Q[\mathcal{G}\mid \pi^*,\mathcal J_\sigma=J]}\mathbb{E}_{(G,\mathsf G)\sim\mathcal{Q}[\cdot\mid \pi^*]}\left[\prod_{O\in \mathcal{O}_\sigma\setminus J}\prod_{e\in O}\ell(G_e,\mathsf G_{\Pi(e)})\mid \mathcal{J}_\sigma=J\right]\,,
	\end{align}
	where the term $\frac{1}{\mathcal Q[\mathcal{G}\mid \pi^*,\mathcal J_\sigma=J]}$ emerges since we moved from $\mathcal Q'$ to $\mathcal Q$. (Note that although $\mathcal Q$ and $\mathcal Q'$ are similar since presumably $\mathcal Q[\mathcal G] = 1-o(1)$, the term $\frac{1}{\mathcal Q[\mathcal{G}\mid \pi^*,\mathcal J_\sigma=J]}$ is not necessarily negligible since conditioned on $\mathcal J_\sigma=J$ may substantially decrease the probability for $\mathcal G$.) By Lemma~\ref{lem:le1}, we have that
	\begin{align*}
		\quad &\mathbb{E}_{(G,\mathsf G)\sim\mathcal{Q}[\cdot\mid \pi^*]}\left[\prod_{O\in \mathcal{O}_\sigma\setminus J}\prod_{e\in O}\ell(G_e,\mathsf G_{\Pi(e)})\mid \mathcal{J}_\sigma=J\right]\\
		=& \prod_{O\in \mathcal{O}_\sigma\setminus J}\mathbb{E}_{(G,\mathsf G)\sim\mathcal Q[\cdot\mid \pi^*]}\left[\prod_{e\in O}\ell(G_e,\mathsf G_{\Pi(e)})\mid O\notin \mathcal{J}_\sigma\right] \leq 1\,.
	\end{align*}
	Combined with \eqref{eq:sum-over-all-pi},  \eqref{eq-condition-on-J-sigma} and \eqref{eq-Q-to-Q'}, it yields that $\mathbb{E}_{(G,\mathsf G)\sim \mathsf Q'}L'(G,\mathsf G)$ is upper-bounded by
	\begin{equation*}
		\frac{1}{\mathcal G[\mathcal G]}\mathbb{E}_{\pi^*\sim \mathcal{Q'}}\frac{1}{n!}\sum_{\sigma}\mathbb{E}_{\mathcal{J}_\sigma\sim \mathcal{Q'}[\cdot\mid \pi^*]}\frac{p^{-|\mathcal{J}_\sigma|}}{\mathcal Q[\mathcal G\mid \pi^*,\mathcal J_\sigma]}=\frac{1}{\mathcal Q[\mathcal G]}\mathbb{E}_{(G,\mathsf G,\pi^*)\sim \mathcal{Q}'}\frac{1}{n!}\sum_{\sigma}\frac{p^{-|\mathcal{J}_\sigma|}}{\mathcal Q[\mathcal G\mid\pi^*,  \mathcal J_\sigma]}\,,
	\end{equation*}
	as required.
\end{proof}

\subsection{Truncation for the $\pi^*$-intersection graph}\label{subsec:on-good-event}
In light of Lemma~\ref{lem-exp-moment}, it suffice to show that the right hand side of \eqref{eq:exp-moment} is upper-bounded by $1+o(1)$ for some appropriately chosen event $\mathcal G$ with $\mathcal Q[\mathcal G]=1-o(1)$. 
Under $\operatorname{H}_1$, the $\pi^*$-intersection graph $\mathcal{H}_{\pi^*} = (V,\mathcal E_{\pi^*})$ of $G$ and $\mathsf G$ is an Erd\H{o}s-R\'enyi graph $\mathbf{G}(n,\frac{\lambda}{n})$, where $\lambda=nps^2$. Recall that we are now under the assumption
\begin{equation}\label{eq-impossibility-assumption}
	p=n^{-\alpha+o(1)}\mbox{ for  some }\alpha<1 \mbox{ and } \lambda\le \lambda_*-\varepsilon \mbox{ for some constant }\varepsilon>0\,.
\end{equation}

In this subsection, we will define our good event $\mathcal G$. To this end, we need some more notations. For simple graphs $H$ and $\mathcal H$, a \emph{labeled embedding} of $H$ into $\mathcal H$ is an injective map $\tau:H\to \mathcal H$, such that  $(\tau(u),\tau(v))$ is an edge of $\mathcal H$ when $(u,v)$ is an edge of $H$. Further, we define an \emph{unlabeled embedding} of $H$ into $\mathcal H$ to be an equivalent class of labeled embeddings: for two labeled embeddings $\tau_1,\tau_2:H\to\mathcal H$, we say $\tau_1\sim\tau_2$ if and only if there exists an automorphism $\phi:H\to H$, such that $\tau_2=\tau_1\circ \phi$. 

Let $t(H,\mathcal H)$ be the number of unlabeled embeddings of $H$ into $\mathcal H$. Then it is clear that the number of labeled embeddings of $H$ into $\mathcal H$ is given by $\operatorname{Aut}(H)t(H,\mathcal H)$, where $\operatorname{Aut}(H)$ stands for the number of automorphsims of $H$ to itself. For each isomorphic class $\mathcal C$, pick a representative element $H_{\mathcal C}\in \mathcal C$ and fix it. For each $k\ge 2$, let $\mathfrak C_{k} =  \mathfrak C_{k}(\mathcal H)$ (respectively $\mathfrak T_k = \mathfrak T_k(\mathcal H)$) be the collection of all such representatives $H_{\mathcal C}$ which are connected non-tree graphs (respectively trees) with $k$ vertices so that $t(H_{\mathcal C}, \mathcal H) \geq 1$.

	Denote $\xi=\frac{1}{2}[\varrho(\lambda_*-\varepsilon)+\varrho(\lambda_*)]$. Since $\lambda_*>1$ in the case $\alpha<1$, by Proposition~\ref{prop:densest-subgraph-is-linear} we have $\varrho(\lambda_*-\varepsilon)<\xi<\varrho(\lambda_*)=\frac{1}{\alpha}$. Hence, when $n$ is large enough, 
	\begin{equation}\label{eq-xi-delta-assumption}
		np^\xi\ge n^{\delta_0} \mbox{ for some constant }\delta_0>0\,.
	\end{equation} 
	Fix some positive constant $\delta<\delta_0$, we say that a graph $\mathcal H = (V, \mathcal E)$ is \emph{admissible} if it satisfies the following properties:
	\begin{enumerate}
		\item[(i)] The maximal edge-vertex ratio over all subgraphs does not exceed $\xi$, i.e.
		\begin{equation}\label{eq:(i)-in-admissibility}
			\max_{\emptyset \neq U\subset V} \frac{|\mathcal E(U)|}{|U|} \leq \xi\,.
		\end{equation}
		\item[(ii)] The maximal degree of $\mathcal{H}$ is no more than $\log n$. 
		\item[(iii)] Any connected subgraph containing at least two cycles has size larger than $2\log \log n$.
		\item[(iv)] For any $k\ge 2$, the number of $k$-cycles is bounded by $n^{\delta k}$.
	\end{enumerate}
	Define the good event $\mathcal{G}=\{\mathcal{H}_{\pi^*}\text{ is }\text{admissible}\}$. 
	\begin{lemma}\label{lem:good-event-probability}
		For an Erd\H{o}s-R\'enyi graph $\mathcal{H}\sim \mathbf{G}\left(n,\frac{\lambda}{n}\right)$,
		\begin{equation}\label{eq:probability-of-admissibility}
			\P[\mathcal H\text{ is admissible}]=1-o(1)\,.
		\end{equation} 
		In addition, there exists a constant $c=c(\lambda,\delta)>0$, such that for any subgraph $H$ satisfies $\P[\mathcal H \text{ is admissible}\mid  H\subset \mathcal H]>0$ and any event $\mathcal F$ that is measurable with respect to and decreasing with edges that are not contained in $H$, we have
		\begin{equation}\label{eq:probability-of-admissible-condition-on-subgraph-1}
			\P[\mathcal H\text{ is admissible}\mid  H\subset \mathcal H, \mathcal F]\ge [1-o(1)]c^{|E(H)|}\,.
		\end{equation} 
	\end{lemma}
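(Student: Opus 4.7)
The plan is to prove the two claims separately. Part~1 will be a direct union bound over the four admissibility conditions; Part~2 requires a Harris-inequality reduction to remove the conditioning on $\mathcal F$, followed by a condition-by-condition analysis, with the global condition (i) being the main obstacle.

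For Part~1, I bound each failing condition: (i) follows from Proposition~\ref{prop:densest-subgraph} combined with the strict inequality $\varrho(\lambda)\le\varrho(\lambda_*-\varepsilon)<\xi$; (ii) from the Chernoff bound~\eqref{eq:chernoff} on $d(v)\sim\mathrm{Bin}(n-1,\lambda/n)$ giving $\P[d(v)>\log n]=n^{-\omega(1)}$ and then a union bound over $v$; (iii) from a first-moment computation, since the expected number of connected subgraphs on $k\le 2\log\log n$ vertices with at least $k+1$ edges is of order $k^{O(k)}\lambda^{k+1}/n=n^{-1+o(1)}$ per $k$, summing to $o(1)$; (iv) from Markov's inequality on the mean $\lambda^k/(2k)$ of the number of $k$-cycles, giving $\P[\#k\text{-cycles}>n^{\delta k}]\le\lambda^k/(2k\,n^{\delta k})$, and summing over $k\ge 3$ to $o(1)$.

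For Part~2, my first step is to drop the conditioning on $\mathcal F$. Conditional on $H\subset\mathcal H$, the edges in $E_0\setminus E(H)$ are i.i.d.\ Bernoulli$(\lambda/n)$; admissibility (viewed as an event on these edges with $H$ fixed in) is decreasing, since adding any such edge can only worsen (i)--(iv); and $\mathcal F$ is decreasing by hypothesis. Harris' inequality therefore yields
\[
\P[\mathcal H\ \text{admissible}\mid H\subset\mathcal H,\mathcal F]\ \ge\ \P[\mathcal H\ \text{admissible}\mid H\subset\mathcal H].
\]
I next represent the conditional distribution of $\mathcal H$ given $H\subset\mathcal H$ as $\tilde{\mathcal H}\cup H$, where $\tilde{\mathcal H}\sim\mathbf G(n,\lambda/n)$ is independent of $H$. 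The hypothesis $\P[\text{admissible}\mid H\subset\mathcal H]>0$ forces $H$ itself to satisfy all of (i)--(iv), a ``pre-admissibility'' property I use throughout.

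The remaining task is to show $\P[\tilde{\mathcal H}\cup H\ \text{admissible}]\ge(1-o(1))c^{|E(H)|}$, which I handle condition by condition. The local conditions (ii)--(iv) each factorize via Harris over vertices or local substructures: all but $O(|E(H)|/\log n)$ factors are $1-o(1)$, and the remaining ``overloaded'' factors (e.g.\ vertices $v$ with $d_H(v)>\log n/2$ for (ii), requiring $d_{\tilde{\mathcal H}}(v)\le\log n-d_H(v)$) are each bounded below by a positive constant; hence the product is $\ge(1-o(1))c^{|E(H)|}$ for any fixed $c\in(0,1)$ once $n$ is large. The main obstacle is condition~(i), which is a global event. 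Decomposing any $U=U_1\cup U_2$ with $U_1\subset V(H)$ and $U_2\subset V\setminus V(H)$, the $(\tilde{\mathcal H}\cup H)$-density on $U$ equals
\[
\frac{|\mathcal E_H(U_1)|+|\mathcal E_{\tilde{\mathcal H}}(U_2)|+|\mathcal E_{\tilde{\mathcal H}}(U_1,U_2)|}{|U_1|+|U_2|}.
\]
Fixing $\xi'\in(\varrho(\lambda),\xi)$, Proposition~\ref{prop:densest-subgraph-is-linear} ensures that the maximal subgraph density of $\tilde{\mathcal H}$ is $\le\xi'$ w.h.p. For $|U|\gtrsim|E(H)|/(\xi-\xi')$ the $H$-contribution $|\mathcal E_H(U_1)|/|U|\le|E(H)|/|U|$ is absorbed into the slack $\xi-\xi'$. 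For small $U$, I combine the pre-admissibility bound $|\mathcal E_H(U_1)|\le\xi|U_1|$, Chernoff bounds on the $\tilde{\mathcal H}$-contributions $|\mathcal E_{\tilde{\mathcal H}}(U_1,U_2)|$ and $|\mathcal E_{\tilde{\mathcal H}}(U_2)|$, and a union bound over $U_1\subset V(H)$ (contributing at most $2^{|V(H)|}\le 4^{|E(H)|}$ terms), which is absorbed into the $c^{|E(H)|}$ factor by choosing $c$ small enough.
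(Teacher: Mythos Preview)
Your Part~1 and the FKG step removing $\mathcal F$ are fine and match the paper. The gap is in your direct analysis of condition~(i) for $\tilde{\mathcal H}\cup H$. Your edge decomposition on $U=U_1\cup U_2$ omits the $\tilde{\mathcal H}$-edges \emph{inside} $U_1$, and this is not cosmetic: pre-admissibility only gives $|\mathcal E_H(U_1)|\le\xi|U_1|$, and equality is allowed, so a single extra $\tilde{\mathcal H}$-edge inside such a saturated $U_1$ (taking $U_2=\emptyset$) already violates~(i). For fixed $U_1$ the probability that $\tilde{\mathcal H}$ places some edge in $U_1$ is of order $\lambda|U_1|^2/n$, not exponentially small, so multiplying by $2^{|V(H)|}$ in a union bound gives nothing once $|E(H)|\gg\log n$ --- and in the application $|E(H)|$ ranges up to order $n$. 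More fundamentally, a union-bound \emph{upper} bound on a failure probability cannot by itself manufacture an exponential-in-$|E(H)|$ \emph{lower} bound on the success probability, so ``absorbed into the $c^{|E(H)|}$ factor'' has no clear meaning here. (Your factorization claim for (iii)--(iv) is also unjustified; these are not local events.)

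The paper avoids all of this with a simple isolation trick. With $V_1=V(H)$ and $V_2=V\setminus V_1$, it lower-bounds the conditional probability by that of the explicit sub-event $\mathcal A_1\cap\mathcal A_2\cap\mathcal A_3$, where $\mathcal A_1$ is the event that no edge of $\mathcal H$ touches $V_1$ except those of $H$, $\mathcal A_2$ that $V_2$ contains no cycle of length below $\lceil\delta^{-1}\rceil$, and $\mathcal A_3$ that the induced graph on $V_2$ satisfies (i)--(iii). On $\mathcal A_1$ the graph decouples into $H$ on $V_1$ and an Erd\H{o}s--R\'enyi graph on $V_2$, so (i)--(iii) follow from pre-admissibility of $H$ together with $\mathcal A_3$, while (iv) follows from $\mathcal A_2$ combined with (ii)--(iii). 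One has $\P[\mathcal A_1]\ge e^{-2\lambda|E(H)|}$ (this is where the $c^{|E(H)|}$ factor actually comes from), $\P[\mathcal A_2]\ge C_\delta>0$, $\P[\mathcal A_3]=1-o(1)$ by Part~1, and FKG assembles these into the desired bound.
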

	\begin{remark}
		By \eqref{eq:probability-of-admissibility}, we see that $\mathcal Q[\mathcal G]=1-o(1)$. In addition, conditioning on $\mathcal J_\sigma=J$ for some realization $J$ sampled from $\mathcal Q[\cdot\mid \pi^*,\mathcal G]$, we have $\mathcal{Q}[\mathcal G\mid \pi^*,\mathcal J_\sigma=J]>0$. Since $\{\mathcal J_\sigma=J\} = \{H(J)\subset \mathcal H_{\pi^*}\} \cap \mathcal F$ where $\mathcal F$ is the event that there is no other edge orbit (except those in $J$) that is entirely contained in $\mathcal H_{\pi^*}$, we see from \eqref{eq:probability-of-admissible-condition-on-subgraph-1} that 
		\begin{equation}\label{eq:probability-of-admissible-condition-on-subgraph-2}
			\mathcal{Q}[\mathcal G\mid \pi^*,\mathcal J_\sigma=J]\ge[1-o(1)]c^{|E\left(H(J)\right)|}=[1-o(1)]c^{|\mathcal{J}_\sigma|}\,.
		\end{equation}
		The preceding inequality is useful for us since on the right hand side of \eqref{eq:exp-moment} there is a term of $\mathcal Q[\mathcal G\mid \pi^*,\mathcal J_\sigma]$.
	\end{remark}
	\begin{proof}[Proof of Lemma~\ref{lem:good-event-probability}]
		First we show \eqref{eq:probability-of-admissibility}. It suffices to bound the probability that either of (i)-(iv) fails. For (i), since $\varrho(\lambda)\le\varrho( \lambda_*-\varepsilon)<\xi$, Proposition~\ref{prop:densest-subgraph} gives  $\P[(i)\text{ fails}]=o(1)$. $\P[(ii) \text{ or }(iii)\text{ fails}]=o(1)$ is well-known. Indeed, the typical value of the maximal degree in $\mathcal H$ is of order $\frac{\log n}{\log\log n}$ (see e.g. \cite[Theorem 3.4]{AM22}), and the typical value for the minimal size of connected subgraphs containing at least two cycles in $\mathcal H$ is at least of order $\log n$ (this can be shown by a simple union bound). For (iv), since the expected number of $k$-cycles in $\mathcal H$ is bounded by $\lambda^k$,  by Markov inequality we get that $\P[(iv) \text{ fails}]\le \sum_{k=1}^{\infty}\frac{\lambda^k}{n^{\delta k}}=o(1)$. Altogether, this yields \eqref{eq:probability-of-admissibility}.
		
		For \eqref{eq:probability-of-admissible-condition-on-subgraph-1}, the case $H=\emptyset$ reduces to \eqref{eq:probability-of-admissibility} by FKG inequality (since $\{\mathcal H\text{ is admissible}\}$ is a decreasing event), so we can assume $ H\neq \emptyset$. The condition $\P[\mathcal H \text{ is admissible }\mid  H\subset \mathcal H]>0$ implies that the subgraph $H$ satisfies all conditions in admissibility. Let $V_1$ be the vertex set of $ H$, and $V_2=V\setminus V_1$. Consider the following three events:
		\begin{equation*}
			\begin{aligned}
				&\mathcal A_1=\{\mbox{There is no edge within $V_1$ except those in $ H$},\, \mathcal{E}(V_1,V_2)=\emptyset\},\\
				&\mathcal{A}_2=\{\text{There is no cycle in }V_2\mbox{ with length less than }K\stackrel{\Delta}{=}\lceil{\delta^{-1}}\rceil\}\,,\\
				&\mathcal{A}_3=\{\text{The subgraph in }V_2\mbox{ satisfies (i), (ii), (iii) in admissibility}\}\,.\\
			\end{aligned}
		\end{equation*}
		We claim that conditioned on $ H\subset \mathcal H$, we have $\mathcal H$ is admissible as long as  $\mathcal A_1\cap \mathcal A_2\cap \mathcal A_3$ holds (we comment that the purpose of defining event $\mathcal A_2$ is to handle the potential scenario where there are $n^{\delta k}$ $k$-cycles in $H$). Clearly, $\mathcal H$ satisfies (i), (ii), (iii) in admissibility by $\mathcal A_1\cap \mathcal A_3$. For (iv), the case $k<K$ is guaranteed by $\mathcal A_2$. When $K\le k\le \log\log n$, no two $k$-cycle share a common vertex by (iii), thus the number of $k$-cycles is at most $n/k<n^{\delta k}$ since $k\ge K$; and when $k\ge \log \log n$, the number of $k$-cycles is bounded by $n(\log n)^k\le n^{\delta k}$ from (ii). Thus (iv) also holds and $\mathcal H$ is admissible.
		
		Note that $\mathcal{A}_1$ is independent with $\mathcal A_2\cap \mathcal A_3$, $\P[\mathcal A_1]\ge e^{-2\lambda| E(H)|}$, and  $\P[\mathcal A_2]>C_\delta$ for some constant $C_\delta>0$ since the distribution of small cycles are approximately independent Poisson variables (one can also use FKG inequality instead of approximate independence here since the number of small cycles are all increasing with the graph). Since the subgraph within $V_2$ is an Erd\H{o}s-R\'enyi, we get $\P[\mathcal A_3^c]=o(1)$ from \eqref{eq:probability-of-admissibility}. Therefore, for some small constant $c=c(\lambda,\delta)>0$ we have
		\begin{align*}
			&\P[\mathcal H\text{ is admissible}\mid  H\subset \mathcal H, \mathcal F]\ge \P[\mathcal A_1\cap \mathcal A_2\cap \mathcal A_3 \mid \mathcal F]\ge \P[\mathcal A_1\cap \mathcal A_2\cap \mathcal A_3] \\
			&\geq \P[\mathcal A_1]\left(\P[\mathcal A_2]-\P[\mathcal A_3^c]\right)
			\ge[C_\delta-o(1)] e^{-2\lambda| E(H)|}\ge [1-o(1)]c^{| E(H)|}\,,
		\end{align*}
		where we applied FKG inequality for the last transition in the first line (note that $\mathcal A_1, \mathcal A_2, \mathcal A_3$ are all decreasing events). This completes the proof of the lemma.
	\end{proof}
	
	As a result of admissibility, we have the following bounds on subgraph counts.
	\begin{lemma}\label{lem:subgraph-counts}
		For an admissible graph $\mathcal H$ and for $k\geq 2$, we have that the total number of labeled embeddings of $T\in \mathfrak{T}_k$ is bounded by $n(4\log n)^{2(k-1)}$, i.e.
		\begin{equation}\label{eq:count-of-tree}
			\sum_{T\in \mathfrak T_k}\operatorname{Aut}(T)t(T,\mathcal H)\le n(4\log n)^{2(k-1)}\,.
		\end{equation}
		In addition, the total number of labeled embeddings of $C\in \mathfrak{C}_k$ is bounded by $k^3(2^{\xi+1}n^\delta)^{k}$, i.e. 
		\begin{equation}\label{eq:count-of-non-tree}
			\sum_{C\in \mathfrak C_k}\operatorname{Aut}(C)t(C,\mathcal H)\le k^3(2^{\xi+1}n^\delta)^{k}\,.
		\end{equation}
	\end{lemma}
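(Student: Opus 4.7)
For (\ref{eq:count-of-tree}), my plan is to encode each labeled embedding as a walk of length $2(k-1)$ in $\mathcal H$ via an Euler tour. I would choose the canonical representative of each isomorphism class so that the depth-first search from vertex $1$, with children of each internal node visited in increasing-label order, produces a distinguished sequence $u_0, u_1, \ldots, u_{2(k-1)}$ of vertices of the abstract tree (with $u_0 = 1$). For a labeled embedding $\phi$ of this representative, the DFS traversal then maps to a walk $\phi(u_0), \phi(u_1), \ldots, \phi(u_{2(k-1)})$ in $\mathcal H$. This walk determines both the abstract tree structure (from the pattern of first-visits versus returns to ancestors) and the labeling (from the DFS order), so the map sending each pair (isomorphism class representative, labeled embedding) to the corresponding walk is injective. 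Using admissibility (ii), the total number of walks of length $2(k-1)$ in $\mathcal H$ is at most $n \cdot (\log n)^{2(k-1)} \leq n(4\log n)^{2(k-1)}$, giving (\ref{eq:count-of-tree}).

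For (\ref{eq:count-of-non-tree}), I plan to decompose a connected non-tree $C$ with $k$ vertices via its $2$-core $C_0$ (the subgraph obtained by iteratively deleting degree-$1$ vertices), with the pruned vertices forming pendant trees hanging off $C_0$. Since $|E(C)| \leq \xi k$ by admissibility (i), the cycle rank $c = |E(C)| - k + 1$ satisfies $1 \leq c \leq (\xi - 1)k + 1$. Admissibility (iii) forces $|V(C_0)| > 2\log\log n$ whenever $c \geq 2$; in particular, when $k \leq 2\log\log n$ the graph $C$ must be unicyclic, with $C_0$ an $l$-cycle for some $3 \leq l \leq k$. In the unicyclic case, admissibility (iv) bounds the labeled embeddings of $C_0$ by $2l \cdot n^{\delta l}$, and I would bound the labeled embeddings of the pendant forest on $k - l$ vertices by a tree-style Euler-tour argument combined with a choice of attachment point on the current skeleton (at most $k$ choices per pendant vertex, with neighbors constrained by admissibility (ii)). For $c \geq 2$, I would employ an ear decomposition of $C_0$: start from a cycle embedded using admissibility (iv), then iteratively add $c - 1$ ears (paths between previously-placed vertices) whose interior vertices are bounded by admissibility (ii), and finally attach the pendant trees.

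The hard part will be the combinatorial bookkeeping in (\ref{eq:count-of-non-tree}). The polynomial prefactor $k^3$ in the target must absorb both the overcounting from the multiple ear decompositions of a given $C$ and the choices of seed cycle, root, and DFS orientation. The exponential $2^{(\xi+1)k}$ should arise from the binomial-style count of candidate back-edges (at most $\xi k$ of them by admissibility (i), giving $2^{\xi k}$ subsets) together with orientation and attachment freedoms per vertex, while the factor $n^{\delta k}$ should come from propagating the cycle bound of admissibility (iv) across the vertices of the $2$-core and absorbing max-degree contributions from the pendant trees into the exponential factor.
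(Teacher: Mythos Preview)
Your argument for \eqref{eq:count-of-tree} is correct and is essentially the paper's argument. The paper phrases it as ``(number of isomorphism classes of $k$-vertex trees) $\times$ (number of labeled embeddings per class),'' invoking Otter's bound $4^{k-1}$ for the first factor and the walk count $n(\log n)^{2(k-1)}$ for the second, whereas you bundle both into a single injective DFS-walk encoding and avoid Otter's theorem. Either way works.

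For \eqref{eq:count-of-non-tree}, your ear-decomposition plan has a genuine gap. With cycle rank $c$, your scheme adds $c-1$ ears after the initial cycle, and each ear requires naming its two endpoints among the $\le k$ already-placed vertices. Since admissibility (i) only forces $c\le (\xi-1)k+1$, this endpoint data alone costs up to $k^{2(c-1)}=k^{\Theta(k)}$, which cannot be absorbed into $k^3\cdot 2^{(\xi+1)k}$. The issue is not ``overcounting from multiple ear decompositions of a given $C$'' but the sheer amount of information an ear decomposition carries when $c$ is linear in $k$. (Your appeal to admissibility (iii) to force $c=1$ only helps when $k\le 2\log\log n$; the bound must hold for all $k$.)

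The parenthetical idea you mention---a ``binomial-style count of candidate back-edges''---is actually the right move, and once you use it the whole $2$-core/ear apparatus becomes unnecessary. The paper's argument is simply: every connected non-tree $C$ on $k$ vertices contains a spanning \emph{unicyclic} subgraph $C'$ (a spanning tree plus one edge), so one overcounts by (Step~1) choosing an isomorphism class of unicyclic graph on $k$ vertices, at most $k^2\cdot 4^{k-1}$ of them; (Step~2) choosing a labeled embedding of $C'$ into $\mathcal H$, at most $2k\,n^{\delta k}$ of them via the cycle-then-pendant-trees count you already have for the unicyclic case; and (Step~3) choosing which of the at most $(\xi-1)k$ remaining $\mathcal H$-edges on the embedded vertex set to include, at most $2^{(\xi-1)k}$ subsets. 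The product is $\le k^3(2^{\xi+1}n^\delta)^k$. The key point is that in Step~3 the extra edges are selected from the \emph{concrete} edges of $\mathcal H$ on the $k$ chosen vertices---whose number is controlled by admissibility~(i)---rather than specified abstractly as endpoint pairs in $[k]$; this is precisely what turns a $k^{\Theta(k)}$ cost into a $2^{O(k)}$ one.
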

	\begin{proof}
		By \cite{Otter48}, the number for isomorphic classes of trees with $k$ vertices is at most $4^{k-1}$. For each class, we claim that the number of labeled embedding is at most $n(\log n)^{2(k-1)}$. This is because each embedding can be encoded by a walk path of length $2(k-1)$ on the graph which corresponds to the depth-first search contour of the image of the embedding. By (ii) in admissibility, the number of paths of length $2(k-1)$ is at most $n(\log n)^{2(k-1)}$. This yields  \eqref{eq:count-of-tree}.
		
		For \eqref{eq:count-of-non-tree}, note that every labeled non-tree subgraph with $k$ vertices on $\mathcal H$ can be constructed by the following steps: \\
		{\bf Step 1.} Pick an isomorphic class of connected graphs with $k$ vertices and $k$ edges, and take its representative $C$ with vertices labeled by $v_1,\dots,v_k$.\\
		{\bf Step 2.} Choose a labeled embedding $\tau:C\to \mathcal H$.\\
		{\bf Step 3.} Add some of the remaining edges within $\{\tau(v_1),\dots,\tau(v_k)\}$ to $\tau(C)$ and get the final subgraph.  
		
		The number of isomorphic classes in {\bf Step 1} is no more than $k^24^{k-1}$ since we can first pick a tree of $k$ vertices and then add an extra edge. For any connected subgraph $C$ with $k$ vertices and $k$ edges, $C$ is a union of a cycle with $r\le k$ vertices together with some trees. The number of labeled embeddings of the cycle is bounded by $2r(n^\delta)^r$ by (iv) in admissibility, and once this is done, the number of ways to embed the rest of trees is bounded by $(\log n)^{2(k-r)}\le n^{\delta(k-r)}$ from (ii) in admissibility (and a similar argument as for \eqref{eq:count-of-tree}). So, the number of labeled embedding in {\bf Step 2} is bounded by $2k(n^{\delta})^k$. Finally, for any labeled embedding $\tau:C\to\mathcal H$, since (i) holds, the total number of remaining edges between $\tau(v_1),\dots,\tau(v_k)$ is bounded by $(\xi-1)k$, we see that the number of choices for {\bf Step 3} is at most $2^{(\xi-1)k}$. Now a simple application of multiplication rule yields \eqref{eq:count-of-non-tree}.
	\end{proof}
	
	\subsection{The truncated exponential moment}\label{subsec:exp-moment}
	We now prove the following bound.
	\begin{proposition}\label{prop:control-exp-moment}
		Suppose \eqref{eq-xi-delta-assumption} holds and suppose that $\mathcal H$ is admissible. Then as $n\to\infty$,
		\begin{equation}\label{eq-deterministic-exp-moment}
			\frac{1}{n!} \sum_\sigma\frac{p^{-|\mathcal J_\sigma|}}{\mathcal Q[\mathcal G\mid \pi^*,\mathcal J_\sigma]} \le1+o(1)\,.
		\end{equation}
	\end{proposition}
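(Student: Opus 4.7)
The plan is to first apply the conditional probability bound $\mathcal Q[\mathcal G\mid \pi^*,\mathcal J_\sigma]\ge [1-o(1)]c^{|\mathcal J_\sigma|}$ recorded in \eqref{eq:probability-of-admissible-condition-on-subgraph-2}, which reduces \eqref{eq-deterministic-exp-moment} to the claim $\tfrac{1}{n!}\sum_\sigma (cp)^{-|\mathcal J_\sigma|}\le 1+o(1)$. The heart of the argument is to expand this exponential via the identity $(cp)^{-|O|\11[O\subseteq \mathcal H]}=1+((cp)^{-|O|}-1)\11[O\subseteq \mathcal H]$ for each orbit $O\in \mathcal O_\sigma$; multiplying over orbits, expanding the resulting product, and bounding $(cp)^{-|O|}-1\le (cp)^{-|O|}$ gives
\[
(cp)^{-|\mathcal J_\sigma|}\le \sum_{\substack{H'\subseteq \mathcal H\\ H'\text{ is }\sigma\text{-invariant}}}(cp)^{-|E(H')|},
\]
where the sum ranges over $\sigma$-invariant edge subsets of $\mathcal H$ (with $V(H')$ denoting the set of non-isolated vertices).

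The next step will be to swap the order of summation. Since a permutation preserving $H'$ must act on $V(H')$ as a graph automorphism and arbitrarily on the complement, $\#\{\sigma:H' \text{ is } \sigma\text{-invariant}\}\le \operatorname{Aut}(H')(n-|V(H')|)!$, giving
\[
\tfrac{1}{n!}\sum_\sigma (cp)^{-|\mathcal J_\sigma|}\le \sum_{H'\subseteq \mathcal H}\operatorname{Aut}(H')(cp)^{-|E(H')|}\tfrac{(n-|V(H')|)!}{n!}.
\]
The empty $H'$ contributes exactly $1$, so it suffices to show the non-empty contribution is $o(1)$. For this I will use the Stirling-type estimate $(n-k)!/n!\le (Ce/n)^k$ (absolute $C$, all $0\le k\le n$), which factorizes over connected components. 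Writing $H'=H_1\sqcup\cdots\sqcup H_s$ and letting $m_{\mathcal C}$ denote the number of components in each connected isomorphism class $\mathcal C$, the key combinatorial identity
\[
\operatorname{Aut}(H')\cdot t(H',\mathcal H)\le \prod_{\mathcal C\text{ connected}}\bigl[\operatorname{Aut}(H_{\mathcal C})\,t(H_{\mathcal C},\mathcal H)\bigr]^{m_{\mathcal C}}
\]
follows by combining $\operatorname{Aut}(H')=\prod_{\mathcal C}m_{\mathcal C}!\prod_i\operatorname{Aut}(H_i)$ with $t(H',\mathcal H)\le \prod_{\mathcal C}t(H_{\mathcal C},\mathcal H)^{m_{\mathcal C}}/m_{\mathcal C}!$, and crucially the $m_{\mathcal C}!$ factors cancel.

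Summing over multisets of connected isomorphism classes then yields $\sum_{H'\subseteq \mathcal H}\operatorname{Aut}(H')(cp)^{-|E(H')|}\tfrac{(n-|V(H')|)!}{n!}\le \prod_{\mathcal C\text{ connected}}(1-y_{\mathcal C})^{-1}$, where $y_{\mathcal C}=\operatorname{Aut}(H_{\mathcal C})\,t(H_{\mathcal C},\mathcal H)\,(cp)^{-|E(H_{\mathcal C})|}\,(Ce/n)^{|V(H_{\mathcal C})|}$. It remains to verify $\sum_{\mathcal C}y_{\mathcal C}=o(1)$ via Lemma~\ref{lem:subgraph-counts}. For trees of size $k\ge 2$, that lemma gives $\sum_{T\in \mathfrak T_k}y_T$ of order $n^{-(1-\alpha)(k-1)+o(k)}$, which is summable since $\alpha<1$; for non-trees of size $k\ge 3$, combining admissibility~(i) (so $|E(H_{\mathcal C})|\le \xi k$) with \eqref{eq-xi-delta-assumption} (so $1-\alpha\xi\ge \delta_0>\delta$) yields $\sum_{C\in \mathfrak C_k}y_C=O(k^3 n^{-(\delta_0-\delta)k})$, also summable. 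Hence $\prod_{\mathcal C}(1-y_{\mathcal C})^{-1}=1+o(1)$, completing the proof. The principal technical obstacle I foresee is the delicate cancellation of the $m_{\mathcal C}!$ factors in the component decomposition: without it, highly symmetric subgraphs such as large matchings would destroy the bound, whereas with it the connected-subgraph count estimates of Lemma~\ref{lem:subgraph-counts} are exactly what is required.
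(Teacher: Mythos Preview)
Your proposal is correct and follows essentially the same approach as the paper: reduce via \eqref{eq:probability-of-admissible-condition-on-subgraph-2}, count permutations $\sigma$ via the automorphism bound $\operatorname{Aut}(H')\,(n-|V(H')|)!$, factorize over connected components (with the crucial $m_{\mathcal C}!$ cancellation), and finish with Lemma~\ref{lem:subgraph-counts} together with admissibility~(i) and \eqref{eq-xi-delta-assumption}. The only organizational difference is that the paper groups $\sigma$ by the isomorphism class of the \emph{maximal} $\sigma$-invariant subgraph $H(\mathcal J_\sigma)$ and proves a separate counting lemma \eqref{eq:number-of-sigma}, whereas your orbit-wise expansion $(cp)^{-|O|\11[O\subseteq\mathcal H]}=1+((cp)^{-|O|}-1)\11[O\subseteq\mathcal H]$ sums over \emph{all} $\sigma$-invariant subgraphs and swaps the order of summation directly; both routes arrive at the same product $\prod_{\mathcal C}(1-y_{\mathcal C})^{-1}$ and the same endgame.
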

	Combined with Lemmas~\ref{lem-exp-moment} and \ref{lem:good-event-probability} as well as discussions at the beginning of Section~\ref{subsec:conditional-second-moment-method}, this then completes the proof of \eqref{eq:negative-<1}.
	
	When proving \eqref{eq-deterministic-exp-moment}, it would be convenient to first fix a subgraph $H$ and sum over all permutations $\sigma$ with $H(\mathcal{J}_\sigma) = H$, and then sum over all possible $H$. To this end, we will need the following lemma on an upper bound for the number of permutations $\sigma$ on $V$ such that $H(\mathcal J_\sigma)\cong H_{\mathcal C}$ for each possible isomorphic class representative $H_\mathcal{C}$ that can arise from an admissible graph and some permutation.  Note that for each realization of $H(\mathcal J_\sigma)$, the collection of its components is isomorphic to a union of some $C$'s in $\bigcup \mathfrak{C}_r$ and some $T$'s in $\bigcup \mathfrak{T}_s$, since $H(\mathcal{J}_\sigma)$ is always a subgraph of $\mathcal H$.
	\begin{lemma}
		Suppose the collection of components of $H_\mathcal{C}$ is isomorphic to a union of some $C_{i}\in \mathfrak C_{r_i}$ with $x_i$ copies for $1\leq i\leq l$ and some $T_{j}\in \mathfrak T_{s_j}$ with $y_j$ copies for $1\leq j\leq m$, where $C_1,\dots,C_l,T_1,\dots,T_m$ are in distinct isomorphic classes. Then
		\begin{align}\label{eq:number-of-sigma}
			|\sigma:H(\mathcal{J}_\sigma)\cong H_\mathcal{C}|\le&\big(n-\sum_{i\in [l]}x_ir_i-\sum_{j\in [m]}y_js_j\big)! \nonumber\\ \times&\prod_{i\in [l]}\left(\operatorname{Aut}(C_i) t(C_i,\mathcal{H})\right)^{x_i}\prod_{j\in [m]}\left(\operatorname{Aut}(T_j) t(T_j,\mathcal H)\right)^{y_j}\,.
		\end{align}
	\end{lemma}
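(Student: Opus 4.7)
The plan is to encode each permutation $\sigma$ with $H(\mathcal{J}_\sigma)\cong H_\mathcal{C}$ by three independent pieces of data and bound the number of choices for each piece. Writing $H := H(\mathcal{J}_\sigma)$, the key structural observation I would make first is that $V(H)$ is $\sigma$-invariant: for any edge $(u,v)\in H$, the entire $\Sigma$-orbit of $(u,v)$ lies in $H$ by the definition of $\mathcal{J}_\sigma$, so $(\sigma(u),\sigma(v))$ is again an edge of $H$ and hence $\sigma(u),\sigma(v)\in V(H)$. Consequently $\sigma$ decomposes uniquely as the pair $(\sigma|_{V(H)},\sigma|_{V\setminus V(H)})$, and $\sigma|_{V(H)}$ permutes the edges of $H$ among themselves, i.e.\ belongs to $\operatorname{Aut}(H)$. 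This decomposition $\sigma\mapsto (H,\sigma|_{V(H)},\sigma|_{V\setminus V(H)})$ is manifestly injective.

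The third component is trivial to count: $\sigma|_{V\setminus V(H)}$ is an arbitrary bijection of a set of size $n-\sum_i x_ir_i-\sum_j y_js_j$, producing the factorial factor in \eqref{eq:number-of-sigma}. For the first two components I would sum jointly. A pair (labeled subgraph $H\subset\mathcal H$ with $H\cong H_\mathcal{C}$, automorphism of $H$) is equivalent to a labeled embedding $\tau:H_\mathcal{C}\hookrightarrow\mathcal H$: the image of $\tau$ is $H$, and composing $\tau$ with a fixed labeling of $H_\mathcal{C}$ recovers the element of $\operatorname{Aut}(H)=\operatorname{Aut}(H_\mathcal{C})$. Equivalently, there are $t(H_\mathcal{C},\mathcal H)$ distinct labeled copies of $H_\mathcal{C}$ in $\mathcal H$, each with $\operatorname{Aut}(H_\mathcal{C})$ automorphisms, so the joint count is precisely the number of labeled embeddings of $H_\mathcal{C}$ into $\mathcal H$.

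Finally, since $H_\mathcal{C}$ is a disjoint union, any labeled embedding of $H_\mathcal{C}$ amounts to choosing a labeled embedding of each component copy subject to pairwise disjoint images. Dropping the disjointness constraint only inflates the count, so using that the number of labeled embeddings of a connected graph $G$ into $\mathcal H$ is exactly $\operatorname{Aut}(G)\,t(G,\mathcal H)$, I obtain the bound $\prod_{i\in [l]}(\operatorname{Aut}(C_i)\,t(C_i,\mathcal H))^{x_i}\prod_{j\in [m]}(\operatorname{Aut}(T_j)\,t(T_j,\mathcal H))^{y_j}$ on the number of labeled embeddings of $H_\mathcal{C}$. Multiplying this by the factorial factor yields \eqref{eq:number-of-sigma}. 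The only step requiring genuine structural reasoning is the opening observation that $V(H)$ is $\sigma$-stable with $\sigma|_{V(H)}\in\operatorname{Aut}(H)$; everything after that is routine orbit-counting and combinatorial bookkeeping.
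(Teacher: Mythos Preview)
Your proof is correct and follows essentially the same approach as the paper. Both arguments rest on the observation that $\sigma$ restricts to an automorphism of $H(\mathcal{J}_\sigma)$ and to an arbitrary permutation on the complement; the paper decomposes the count as (unlabeled embedding of $H_\mathcal{C}$) $\times$ (automorphism of the image, expressed via the component structure as $\prod_i x_i!\operatorname{Aut}(C_i)^{x_i}\prod_j y_j!\operatorname{Aut}(T_j)^{y_j}$), whereas you combine these two factors directly into the count of labeled embeddings of $H_\mathcal{C}$ and then bound that by the product over components---a minor but clean repackaging of the same idea.
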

	\begin{proof}
		First we choose an unlabeled embedding $\tau$ for $H_{\mathcal C}$ in $\mathcal H$. Since  $C_1,\dots,C_l,T_1,\dots,T_m$ are in distinct isomorphic classes, $\tau$ can be viewed as a product of $\tau|_{x_i\cdot C_i}$ and $\tau|_{y_j\cdot T_j}$ for $1\leq i\leq l$ and $1\leq j\leq m$, where each $\tau|_{x_i\cdot C_i}$ (respectively $\tau|_{y_j\cdot T_j}$) is an unlabeled embedding for $x_i$ disjoint copies of $C_i$ (respectively $y_j$ disjoint copies of $T_j$). Therefore, the number of choices for $\tau$ is bounded by 
		\begin{eqnarray}\label{eq:location-choice}
			\prod_{i\in [l]}\binom{t(C_i,\mathcal H)}{x_i}\prod_{j\in [m]}\binom{t(T_j,\mathcal H)}{y_j}\le \prod_{i\in [l]}\frac{t(C_i,\mathcal H)^{x_i}}{x_i!}\prod_{j\in [m]}\frac{t(T_j,\mathcal H)^{y_j}}{y_j!}\,.
		\end{eqnarray}
		For each unlabeled embedding $\tau$, we wish to bound the number of $\sigma$ such that  $H(\mathcal J_\sigma) = \phi\circ \tau(H_{\mathcal C})$ for some permutation $\phi$ on the vertex set of  $\tau(H_{\mathcal C})$, where the $=$ is in the sense of equal for labeled graphs. We claim that  the number of such $\sigma$'s is bounded by
		\begin{equation}\label{eq:number-of-configurations}
			\left(n-\sum_{i\in [l]}x_ir_i-\sum_{j\in [m]}y_js_j\right)!\times \prod_{i\in [l]}x_i!\operatorname{Aut}(C_i)^{x_i}\prod_{j\in [m]}y_j!\operatorname{Aut}(T_j)^{y_j}\,.
		\end{equation}
		Let $V_1$ be the vertex set of $\tau(H_\mathcal{C})$, then $|V_1|=\sum_{i\in [l]}x_ir_i+\sum_{j\in[m] }y_js_j$. It is clear that any aforementioned desired $\sigma$ can be decomposed into two permutations $\sigma_1$ and $\sigma_2$ on $V_1$ and $V\setminus V_1$, respectively. The number of choices for $\sigma_2$ is at most $(n-|V_1|)!$ (it may be strictly less than $(n-|V_1|)!$ since on $V\setminus V_1$ we are not allowed to produce another edge orbit that is entirely in $\mathcal H$). In order to bound the number of choices for $\sigma_1$, we use the following crucial observation: for any $u\in V_1$, we have $\sigma_1$ inhibits to an isomorphism between the two components of $\tau(H_\mathcal{C})$ which contain $u$ and $\sigma_1(u)$. That is to say, for any $u$ adjacent to $v$ in $\tau(H_\mathcal{C})$, $\sigma(u)$ is also adjacent to $\sigma(u)$  in $\tau(H_{\mathcal C})$;  similarly, for any $w$ adjacent to $z$ in $\tau(H_\mathcal{C})$, $\sigma^{-1}(w)$ is also adjacent to $\sigma^{-1}(z)$ in $\tau(H_\mathcal{C})$. This is true because of the definition of edge orbit and our requirement that $H(\mathcal J_\sigma)$ is entirely contained in $\mathcal H$.
		From this observation, for each $C_i$ (and similarly for $T_j$) we will ``permute'' its $x_i$ copies so that $\sigma$ maps one copy to its image under the permutation, and  within each copy of $C_i$ we have the freedom of choosing an arbitrary automorphism. In addition, the choice of such permutations and automorphisms completely determine $\sigma_1$. Therefore, the number of choices for $\sigma_1$ is bounded by $\prod_{i\in[l]}x_i!\operatorname{Aut}(C_i)^{x_i}\prod_{j\in[m]}y_j!\operatorname{Aut}(T_j)^{y_j}$. This proves \eqref{eq:number-of-configurations}. Combined with \eqref{eq:location-choice}, it yields \eqref{eq:number-of-sigma}.
	\end{proof}
	
	\begin{proof}[Proof of Proposition~\ref{prop:control-exp-moment}]
		We have 
		\begin{equation}\label{eq:est}
			\frac{1}{n!}\sum_\sigma \frac{p^{-|\mathcal{J}_\sigma|}}{\mathcal Q[\mathcal G\mid \pi^*,\mathcal J_\sigma]}\le\frac{1}{n!}\sum\frac{p^{-|E(H_\mathcal{C})|}}{\mathcal Q[\mathcal G\mid \pi^*,H(\mathcal{J}_\sigma)\cong H_\mathcal{C}]}\times|\sigma:H(\mathcal{J}_\sigma)\cong H_\mathcal{C}|\,,
		\end{equation}
		where the sum is over all possible representatives $H_\mathcal{C}$.
		We will bound $\mathcal Q[\mathcal G\mid \pi^*,\mathcal J_\sigma]$  by \eqref{eq:probability-of-admissible-condition-on-subgraph-2}. For each tree component with $s$ vertices it is clear that the number of edges is $s-1$, and crucially for each non-tree component with $r$ vertices, we use \eqref{eq:(i)-in-admissibility} to bound the number of edges in the component. In addition, we use \eqref{eq:number-of-sigma} to bound $|\sigma:H(\mathcal{J}_\sigma)\cong H_\mathcal{C}|$. Therefore, by enumerating all the possible isomorphic class representatives $H_\mathcal{C}$, we can upper-bound \eqref{eq:est} by
		\begin{equation*}
			\begin{aligned}\label{eq:bound-exp-moment-1}
				& \frac{1}{n!}\sum_{l,m\ge 0}\sum_{C_1,\dots,C_l\in \bigcup \mathfrak{C}_r}\sum_{T_1,\dots,T_m\in \bigcup \mathfrak T_s}\sum_{x_1,\dots,x_l>0}\sum_{y_1,\dots,y_m>0}[1+o(1)]\\
				\times&\ (cp)^{-\xi \sum_{i\in [l]}x_i|C_i|-\sum_{j\in [m]}y_j(|T_j|-1)}\left(n-\sum_{i\in [l]}x_i|C_i|-\sum_{j\in [m]}y_j|T_j|\right)!\\
				\times&\prod_{i\in [l]}\left(\operatorname{Aut}(C_i) t(C_i,\mathcal{H})\right)^{x_i}\prod_{j\in [m]}\left(\operatorname{Aut}(T_j) t(T_j,\mathcal H)\right)^{y_j}\,,
			\end{aligned}
		\end{equation*}
		where $c=c(\lambda,\delta)$ is the constant in \eqref{eq:probability-of-admissible-condition-on-subgraph-2}. By Stirling's formula we see 
		\begin{equation*}
			\frac{(n-k)!}{n!}\le \left(\frac{2e}{n}\right)^{k},\mbox{ for all } 0\le k\le n\,.
		\end{equation*}
		Write $c'={c}/{2e}$, then \eqref{eq:bound-exp-moment-1} can be further bounded by $1+o(1)$ multiples
		\begin{align}\label{eq:bound-exp-moment-2}
			&\sum_{l\ge 0}\sum_{C_1,\dots,C_l\in \bigcup\mathfrak C_r}\sum_{x_1,\dots,x_l>0}\left(\frac{1}{c'np^{\xi}}\right)^{x_1|C_1|+\dots+x_l|C_l|}\prod_{i\in [l]}\left(\operatorname{Aut}(C_i)t(C_i,\mathcal{H})\right)^{x_i} \nonumber\\
			\times&
			\sum_{m\ge 0}\sum_{T_1,\dots,T_m\in \bigcup\mathfrak T_s}\sum_{y_1,\dots,y_m>0}\left(\frac{1}{c'np}\right)^{y_1|T_1|+\dots+y_m|T_m|}\prod_{j\in[m]}\left(p\operatorname{Aut}(T_j)t(T_j,\mathcal{H})\right)^{y_j} \nonumber\\
			\le &\prod_{C\in\bigcup \mathfrak C_r}\left(1+\sum_{x>0}\left(\frac{\operatorname{Aut}(C)t(C,\mathcal H)}{(c'np^\xi)^{|C|}}\right)^x\right)\prod_{T\in \bigcup\mathfrak T_s}\left(1+\sum_{y>0}\left(\frac{p\operatorname{Aut}(T) t(T,\mathcal H)}{(c'np)^{|T|}}\right)^y \right)\,.
		\end{align}
		Under the assumption that $\mathcal H$ is admissible and the condition $np\ge np^\xi\ge n^{\delta_0}$ by \eqref{eq-xi-delta-assumption}, we get from \eqref{eq:count-of-tree} that
		\begin{equation}\label{eq:<1/2-2}
			\frac{p\operatorname{Aut}(T)t(T,\mathcal H)}{(c'np)^{|T|}}\le \frac{np(4\log n)^{2(|T|-1)}}{c'np\cdot(c'n^{\delta_0})^{|T|-1}}=o(1),\quad\mbox{ for all }T\in \mathfrak T_s\, .
		\end{equation}
		Similarly, we get from \eqref{eq:count-of-non-tree} that (recall $\delta<\delta_0$ by choice)
		\begin{equation}\label{eq:<1/2-1}
			\frac{\operatorname{Aut}(C)t(C,\mathcal H)}{(c'np^\xi)^{|C|}}\le \frac{|C|^3(2^{\xi+1}n^{\delta})^{|C|}}{(c'n^{\delta_0})^{|C|}}=o(1),  \quad\mbox{ for all } C\in \mathfrak{C}_r\, .
		\end{equation}
		Since  $\log(1+x)\le x$ for all $x>0$, we get from  \eqref{eq:<1/2-2} and \eqref{eq:<1/2-1} that the logarithm of \eqref{eq:bound-exp-moment-2} is bounded by $o(1)$ plus
		\begin{equation}\label{eq:logarithm-bound}
			[1+o(1)]\left[\sum_{C\in\bigcup \mathfrak C_r}\frac{\operatorname{Aut}(C)t(C,\mathcal H)}{(c'np^\xi)^{|C|}}+\sum_{T\in \bigcup\mathfrak T_s}\frac{p\operatorname{Aut}(T)t(T,\mathcal H)}{(c'np)^{|T|}}\right]\,.
		\end{equation}
		In order to bound \eqref{eq:logarithm-bound}, we first sum over all $C\in \mathfrak{C}_r$ and $T\in \mathfrak{T}_s$ then sum over $r, s$.  Applying this procedure and using \eqref{eq:count-of-tree} and \eqref{eq:count-of-non-tree}, we get that \eqref{eq:logarithm-bound} is at most 
		\begin{equation*}
			[1+o(1)] \left[\sum_{k\ge 2}\frac{k^3(2^{\xi+1}n^\delta)^k}{(c'n^{\delta_0})^k}+\sum_{k\ge 2}\frac{np(4\log n)^{k-1}}{(c'np)^k}\right]=o(1)\,,
		\end{equation*}
		which shows that the logarithm of \eqref{eq:bound-exp-moment-2} is $o(1)$. This implies that \eqref{eq:bound-exp-moment-1} is bounded by $1+o(1)$.  Combined with \eqref{eq:est}, this completes the proof of Proposition~\ref{prop:control-exp-moment}.
	\end{proof}

	\small

\end{document}